\providecommand{\remove}[1]{}
\theoremstyle{plain}
\newtheorem{theorem}{Theorem}[section]
\newtheorem{lemma}[theorem]{Lemma}
\newtheorem{proposition}[theorem]{Proposition}
\newtheorem{mremark}[theorem]{Remark}
\newtheorem{observation}{Observation}
\theoremstyle{definition}
\newtheorem{definition}[theorem]{Definition}
\theoremstyle{remark}
\newcommand{\cardin}[1]{\lvert {#1} \rvert}
\newcommand{\us}{\chi_{\textrm{us}}}
\newcommand{\vr}{\chi_{\textrm{vr}}}
\newcommand{\lvr}{\chi_{\textrm{l-vr}}}
\newcommand{\pw}{\textrm{pw}}
\newcommand{\tw}{\textrm{tw}}
\newcommand{\alert}[1]{\textbf{\color{red}
[[[#1]]]}\marginpar{\textbf{\color{red}**}}\typeout{ALERT:
\the\inputlineno: #1}}
\newcommand{\namedref}[2]{\hyperref[#2]{#1~\ref*{#2}}}
\newcommand{\sectionref}[1]{\namedref{Section}{#1}}
\newcommand{\theoremref}[1]{\namedref{Theorem}{#1}}
\newcommand{\defref}[1]{\namedref{Definition}{#1}}
\newcommand{\figureref}[1]{\namedref{Figure}{#1}}
\newcommand{\remarkref}[1]{\namedref{Remark}{#1}}
\newcommand{\lemmaref}[1]{\namedref{Lemma}{#1}}
\newcommand{\propref}[1]{\namedref{Proposition}{#1}}
\newcommand{\obref}[1]{\namedref{Observation}{#1}}
\newcommand{\N}{\mathbb{N}}
\begin{document}

\title{On Vertex Rankings of Graphs and its Relatives}

\author{%
Ilan Karpas\thanks{
Mathematics department,Ben-Gurion University, Be'er Sheva 84105, Israel. 
\texttt{karpasi@math.bgu.ac.il}. Work on this paper was conducted while the author was an M.Sc student under the supervision of Shakhar Smorodinsky.
}
\and
Ofer Neiman\thanks{Computer Science department, Ben-Gurion University,
Be'er Sheva 84105, Israel.
\texttt{neimano@cs.bgu.ac.il}. Supported in part by ISF grant No. (523/12) and by the European Union�s Seventh Framework
Programme (FP7/2007-2013) under grant agreement $n^\circ 303809$.
}
\and
Shakhar Smorodinsky\thanks{Mathematics department, Ben-Gurion University,
Be'er Sheva 84105, Israel.
\texttt{shakhar@math.bgu.ac.il}, work on this paper by Shakhar Smorodinsky was supported by Grant 1136/12 from the Israel Science Foundation.
}
}

\date{}
\maketitle


\begin{abstract}
A vertex ranking of a graph is an assignment of ranks (or colors) to the vertices of the graph, in such a way that any simple path connecting two vertices of equal rank, must contain a vertex of a higher rank. In this paper we study a relaxation of this notion, in which the requirement above should only hold for paths of some bounded length $l$ for some fixed $l$. For instance, already the case $l=2$ exhibit quite a different behavior than proper coloring. We prove upper and lower bounds on the minimum number of ranks required for several graph families, such as trees, planar graphs, graphs excluding a fixed minor and degenerate graphs.
\end{abstract}


\section{Introduction}

A {\em vertex-ranking} of a graph $G=(V,E)$ with $k$ colors, is a $k$-coloring $c:V\to[k]$ such that for any $u,v\in V$, if $c(u)=c(v)$ then any simple path between $u,v$ contains a vertex $w$ with $c(w)>c(u)$ (note that this implies that the coloring is proper). The minimum $k$ such that $G$ admits a vertex-ranking with $k$ colors is called the \emph{vertex ranking chromatic number} (abbreviated vr-number), and is denoted by $\vr(G)$. In this paper we introduce a natural relaxation of vertex-ranking to the case where the above requirement holds only for paths of length bounded by $l$. In particular, we focus on the special case $l=2$.
\begin{definition}
Let $G=(V,E)$ be a simple graph. A \emph{unique-superior} coloring (abbreviated \emph{us-coloring}) of $G$ with $k$ colors is a function $c:V \to [k]$, such that $c$ is a proper coloring, and furthermore,
for every length two simple path $(u,v,w)$, if $c(u)=c(w)$ then $c(v)>c(u)$.
The minimum $k$ for which a graph $G$ admits a us-coloring with $k$ colors is called the \emph{unique-superior chromatic number} of $G$ (abbreviated \emph{us-number}), and is denoted by $\us(G)$.

More generally, in an  {\em $l$-vertex ranking} of $G$ with $k$ colors, every simple path of length at most $l$ between two vertices of the same color contains a vertex of higher color. The minimum $k$ for which $G$ admits an $l$-vertex ranking is called the {\em $l$-vr-number} of $G$, and is denoted by $\lvr(G)$.
\end{definition}

Note that trivially we have:
$$
\chi(G) \leq \us(G) \leq \vr(G).
$$

The vr-number is a well studied notion, with both theoretical and practical applications.
It has been used for VLSI lay-out design, Cholesky matrix factorization and scheduling
assembly problems (see, e.g., \cite{IRV88,L80,L90}). Recently, Even and Smorodinsky \cite{ES11} showed a strong
connection between vertex ranking of graphs and some online hitting set problems on graphs. Efficient polynomial algorithms to determine the vr-number of graphs have been found for several families of graphs: Sch\"{a}ffer showed this for trees \cite{S89} and Deugun et al. for permutation graphs \cite{DKKM94}. For other families of graphs, such as chordal graphs, bipartite and co-bipartite graphs, to name a few, the problem was shown to be NP-hard \cite{DN06,BDJ98}.

As mentioned above, for every graph $G$  $\chi(G)\leq \us(G) \leq \vr(G)$. However, these parameters can be arbitrarily far apart already for bipartite graphs. Indeed, consider for example the family of trees. It is a well known fact that every tree $T$ is bipartite i.e.,  $\chi(T)=2$. However, it is not a difficult exercise to show that for every path $P$ of length $n$ we have $\vr(P) = \Omega(\log n)$. See e.g., \cite{KMS95,Smor}. More generally, consider the family of planar graphs (a superset of the family of trees). By the famous Four-Color Theorem \cite{AH77} we have $\chi(G)\leq 4$ for every planar graph $G$. Katchalski et al. \cite{KMS95} proved that for any planar graph $G$ with $n$ vertices, $\vr(G)=O(\sqrt{n})$. They also showed that this bound is best possible, i.e., that for any integer $n$ there is a planar graph $G$ with $n$ vertices such that $\vr(G)= \Omega(\sqrt{n})$. Their proof extends to any fixed minor-free family. That is, a family of graphs excluding a fixed subgraph $H$ as a minor (see below for details). Another family of graphs for which the chromatic number and the vr-number can be far apart, is the family of $2$-degenerate graphs (or more generally $k$-degenerate graphs).
A graph is called $d$-degenerate if every subgraph has a vertex of degree at most $d$.
 It is well known and an easy fact to prove that a $d$-degenerate graph is always $d+1$-colorable. However, for any integer $n$ there are $2$-degenerate graphs $G$ with $n$ vertices such that $\vr(G) = \Omega(n)$.

\subsection{Related Notions of Coloring}

The notion of vertex-ranking is in itself a special case of the more general notion of \textit{unique-maximum} coloring of hypergraphs.\footnote{A hypergraph $H=(V,E)$, has vertices $V$ and hyperedges $E \subseteq 2^V$.}
A \textit{unique-maximum} coloring (abbreviated {\em um-coloring}) of a hypergraph $H$ is a coloring of its vertices such that for every hyperedge $e\in E$, the highest color of a vertex of $e$ is unique in $e$.
Then, for a given graph $G=(V,E)$, a vertex ranking of $G$ is exactly a um-coloring of the hypergraph $H=(V,E')$, where $E'$ is the family of all subsets of vertices that form a simple path in $G$.

A slightly different variant of the notion of um-coloring is that of \textit{conflict-free coloring}. This notion is a relaxation of um-coloring. In a conflict-free coloring of a hypergraph, the restriction is that for every hyperedge there is some color that occurs exactly once in that hyperedge. This coloring has been studied in many settings for the last decade, see the survey \cite{Smor} and the references therein.

There are other types of coloring that have been studied in the literature, which are more relaxed than us-coloring, yet stricter than proper coloring. Two such colorings are \textit{acyclic coloring} and \textit{star coloring}. An acyclic coloring of a graph $G$ is a proper coloring, such that, for every cycle of $G$, at least three colors appear in the cycle. Equivalently, its a proper coloring such that every subgraph induced by the union of two color classes is acyclic. A star coloring of a graph $G$ is a proper coloring, for which every path of length $4$ in $G$ uses at least $3$ colors. Equivalently, it is a proper coloring such that every subgraph induced by the union of two color classes is a collection of stars.
the star chromatic number and the acyclic chromatic number of $G$, $\chi_s(G)$ and $\chi_a(G)$, respectively, are defined in the obvious manner.
It is easy to see that we have the following hierarchy: Let $G$ be a graph, than $\chi(G)\leq \chi_a(G) \leq \chi_s(g) \leq \us(g) \leq \vr(G)$.
Gr\"{u}nbaum \cite{G73} showed that, for every planar graph $G$, $\chi_a(G)\leq 9$, and conjectured that $\chi(G) \leq 5$ for every planar graph, a conjecture which Borodin \cite{B79} later proved (this bound is known to be tight). A few years later, Ne\v{s}et\v{r}il and Ossona de Mendez \cite{NO03} showed that the acyclic chromatic number is bounded for every minor-excluding family.
Albertson et al. \cite{ACKKR04} proved that, for every graph $G$: $\chi_s(G)\leq 2\chi_a(G)^2+\chi_a(G)$, which means that the star coloring chromatic number is also bounded for graphs excluding a fixed minor.

\subsection{Our Results}
We provide lower and upper bounds on the us-number and the $l$-vr-number of several graph families. For trees we show a tight bound of $\Theta(\log n/\log\log n)$ on the us-number, where $n$ is the number of vertices. Thus already for trees the us-number can be far apart both from the chromatic number and the vr-number. For planar graphs, and more generally, graphs excluding a fixed minor, we show a nearly tight upper bound of $O(\log n)$ on the us-number and $O(l\cdot\log n)$ on the $l$-vr-number. For the family of $d$-degenerate graphs we show a lower bound of $\Omega(n^{1/3})$, and an upper bound of $O(\sqrt{n})$ on the us-number.

\section{Preliminaries}

All graphs mentioned in this paper are simple and undirected.
For a graph $G=(V,E)$ we
denote by $\Delta(G)$ (respectively, $\delta(G)$) its maximum (resp. minimum) degree. For a subset $V' \subseteq V$, $G[V']$ is the induced subgraph on $V'$, and $G\setminus V'$ is the graph induced on $V\setminus V'$. For a subgraph $H$ of $G$, we may write $G\setminus H$ which means $G\setminus V(H)$. For a graph $G=(V,E)$, a {\em subdivision} of an edge $\{u,v\}\in E$ means adding a new vertex $w$ on the edge which is connected only to $u$ and to $v$. In other words, it is the graph $G'=(V\cup\{w\},E\cup\{u,w\}\cup\{w,v\}\setminus\{u,v\})$.

\begin{definition} \label{d-deg}
A graph $G=(V,E)$ is called \textit{$d$-degenerate} if for every $V'\subseteq V$, $\delta(G[V']) \leq d$.
\end{definition}
Let $\mathbb{G}_d$ denote the family of \textit{$d$-degenerate graphs}.
It is easy to see that $\chi(G) \leq d+1$ for every $G \in \mathbb{G}_d$. The following propositions are well known, we include proofs for completeness.

\begin{proposition} \label{order}
Let $G=(V,E) \in \mathbb{G}_d$ be a graph with $n$ vertices. Then there is an ordering of the vertices  $(v_1,\dots,v_n)$,
so that for every $1 \leq i \leq n$, the vertex $v_i$ has at most $d$ neighbors $v_j$ with $j<i$.
\end{proposition}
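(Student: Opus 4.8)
The plan is to prove this by induction on $n = |V|$, repeatedly stripping off a low-degree vertex. Since $G$ is $d$-degenerate, the whole graph $G = G[V]$ has minimum degree at most $d$, so there exists a vertex of degree at most $d$ in $G$; call it $v_n$. I would then consider the induced subgraph $G' = G \setminus \{v_n\} = G[V \setminus \{v_n\}]$. Every subset of $V \setminus \{v_n\}$ is also a subset of $V$, so $G'$ is again $d$-degenerate (degeneracy is clearly inherited by induced subgraphs, directly from \defref{d-deg}), and $G'$ has $n-1$ vertices.

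By the induction hypothesis applied to $G'$, there is an ordering $(v_1, \dots, v_{n-1})$ of $V \setminus \{v_n\}$ such that for each $1 \le i \le n-1$, the vertex $v_i$ has at most $d$ neighbors among $\{v_1, \dots, v_{i-1}\}$. Appending $v_n$ at the end yields an ordering $(v_1, \dots, v_n)$ of $V$. For $i \le n-1$ the required property holds because the neighbors of $v_i$ preceding it in this longer ordering are exactly the same as before (we only added $v_n$ at the very end). For $i = n$, the vertices preceding $v_n$ are all of $v_1, \dots, v_{n-1}$, so the number of neighbors of $v_n$ among them is at most $\deg_G(v_n) \le d$ by the choice of $v_n$. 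The base case $n = 1$ is trivial, since $v_1$ has no predecessors. This completes the induction.

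There is essentially no hard step here; the only point requiring any care is the observation that $d$-degeneracy passes to induced subgraphs, which is immediate from the quantifier "for every $V' \subseteq V$" in the definition, and the bookkeeping that prepending versus appending does not disturb the previously established constraints — which is why I strip the low-degree vertex and place it \emph{last} in the ordering rather than first. (One could equally run the argument placing successive low-degree vertices first and building the order from the front; the version above is the cleanest to phrase with a single induction.)
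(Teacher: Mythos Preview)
Your proof is correct and follows essentially the same approach as the paper's: both argue by induction on $n$, selecting a vertex $v$ of degree at most $d$, applying the induction hypothesis to $G\setminus\{v\}$, and then placing $v$ last in the ordering. Your write-up is more detailed (explicitly noting that degeneracy passes to induced subgraphs and that appending at the end does not disturb earlier constraints), but the argument is the same.
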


\begin{proof}
By induction: let $v \in V$ be a vertex of degree at most $d$. By the induction hypothesis, the vertices of $G\setminus {v}$ can  be ordered $(v_1,\dots,v_{n-1})$ so that for all $1\le i\le n-1$, $v_i$ has at most $d$ neighbors to its left. Put $v_n=v$ and obviously the order $(v_1,\dots,v_n)$ satisfies the desired property.
\end{proof}

\begin{proposition}\label{ez-lemma}
Let $G=(V,E)$ be  a graph, $|V|=n$, $|E|=m$, and let $U \subseteq V$ be a a subset of vertices of degree at least $d$.
Then $|U| \leq 2m/d$.
\end{proposition}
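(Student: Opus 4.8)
The statement to prove is Proposition~\ref{ez-lemma}: if $G=(V,E)$ has $|E|=m$ and $U\subseteq V$ is a set of vertices each of degree at least $d$, then $|U|\le 2m/d$. This is a routine handshake-lemma bound, so the proof is short.

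\medskip

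\noindent\textbf{Proof plan.} The plan is to use the handshake lemma (degree-sum formula) restricted to the vertices of $U$. First I would recall that $\sum_{v\in V}\deg(v)=2m$, since every edge contributes exactly $2$ to the degree sum. Since degrees are nonnegative, dropping the terms for $v\notin U$ only decreases the sum, so $\sum_{v\in U}\deg(v)\le \sum_{v\in V}\deg(v)=2m$. On the other hand, by the hypothesis every $v\in U$ has $\deg(v)\ge d$, hence $\sum_{v\in U}\deg(v)\ge d\cdot|U|$. Combining the two inequalities gives $d\cdot|U|\le 2m$, i.e., $|U|\le 2m/d$, as required.

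\medskip

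\noindent I do not anticipate any real obstacle here; the only thing to be mildly careful about is the direction of the inequality when restricting the sum to $U$ (we need degrees to be nonnegative, which is automatic) and the fact that the degree counted is the degree in $G$, not in $G[U]$, which is exactly what the statement asserts. No further case analysis is needed.
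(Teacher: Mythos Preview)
Your proof is correct and follows essentially the same approach as the paper: both use the handshake lemma $\sum_{v\in V}\deg(v)=2m$, restrict the sum to $U$, and bound below by $d\,|U|$ to conclude $|U|\le 2m/d$.
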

\begin{proof}
By summing the degrees of the vertices of $G$, we have:
\[
2m=\Sigma_{v \in V}d(v) \geq \Sigma_{v \in U}d(v) \geq d|U|~,
\]
so that $|U| \leq 2m/d$.
\end{proof}

The {\em square} of a graph $G=(V,E)$, denoted by $G^2=(V,E')$, is the graph on the vertex set $V$ with $\{u,v\}\in E'$ if and only if $d_G(u,v)\in\{1,2\}$ where $d_G(u,v)$ denotes the length of a shortest path between $u$ and $v$. Observe that a proper coloring of $G^2$ is a us-coloring of $G$ (because there are no paths of length two whose end-points may get the same color).

\section{The us-number of Trees}

In this section we focus on the us-number of trees, and provide sharp asymptotic bounds.
\begin{theorem}
For every tree $T$ on $n$ vertices,
\[
\us(T)=O(\frac{\log{n}}{\log{\log{n}}})~.
\]
\end{theorem}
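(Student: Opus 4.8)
The plan is to find a small set of "high" vertices whose removal breaks the tree into small pieces, color these high vertices with the top colors, and recurse on the pieces with a disjoint palette of lower colors. The natural tool is the classical centroid/separator decomposition of trees: in any tree on $n$ vertices there is a single vertex whose removal leaves components each of size at most $n/2$. However, a single centroid only gives a factor-$2$ reduction per level of recursion, which yields $\Theta(\log n)$ colors, not $\Theta(\log n/\log\log n)$. To get the improved bound we must remove a larger "layer" of vertices at each step so that the recursion depth drops to $O(\log n/\log\log n)$ while the number of colors used per layer stays bounded by $O(1)$ — or more precisely so that the total color count telescopes correctly.

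Concretely, I would proceed as follows. Fix a parameter $t$ (to be chosen roughly as $\log n/\log\log n$, equivalently $t! \geq n$ or $t^t \geq n$). At the top level, repeatedly extract centroids to obtain a set $S$ of at most, say, $t$ vertices such that every component of $T \setminus S$ has at most $n/2^{?}$ vertices — actually the cleaner approach is: use a single centroid $r$, root the tree there, and argue by a careful accounting that if we recurse with the right budget the depth is controlled. The key structural observation for the us-constraint: a length-two path $(u,v,w)$ in $T$ means $v$ is adjacent to both $u$ and $w$. So if the separator vertices in $S$ all receive colors strictly larger than everything used in the recursion, the only new length-two paths to worry about are those with all of $u,v,w \in S$, or with the middle vertex $v$ outside $S$ but both endpoints equal-colored — the latter is handled inductively within a single component since $v$ and its two tree-neighbors lie in the same component once $S$ is removed (here one must be slightly careful: a neighbor of $v$ could lie in $S$, but then that endpoint has a top color, distinct from $v$'s recursive color). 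Thus it suffices to (i) properly us-color $T[S]$ with the top $|S|$ colors — and since $T[S]$ is a forest, in fact any proper coloring of $T[S]^2$ works, costing $O(\Delta(T[S])) = O(|S|)$ colors if $S$ has bounded induced degree, which we can arrange — and (ii) us-color each component recursively with a fresh lower palette.

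To make the depth $O(\log n/\log\log n)$ rather than $O(\log n)$, at each level I would extract not one but about $t$ centroid-type vertices, reducing component size from $n$ to about $n/t$ per level (each extracted centroid from a current component halves it, and extracting $\log t$ of them per component, across $O(1)$ "generations," shrinks everything by a factor $t$; iterating $O(\log n/\log t)$ times reaches singletons). Choosing $t \approx \log n$ gives recursion depth $O(\log n/\log\log n)$. The budget per level is $O(\log t) = O(\log\log n)$ colors for the separators (one color per extracted centroid, since at a given level the extracted vertices along any root-leaf chain number only $O(\log t)$), so the total is $O(\log\log n) \cdot O(\log n/\log\log n) = O(\log n)$ — which is not yet the claim. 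The refinement that actually yields $O(\log n/\log\log n)$: observe the separators extracted at a single level form a forest of \emph{bounded depth} $O(\log t)$, and a bounded-depth forest has us-number $O(\log t / \log\log t)$ only if we recurse — so instead one sets $t = \log n / \log\log n$ and shows the separators at each level can be us-colored with a \emph{constant} number of colors because they form a structure (a "caterpillar-like" union of short paths from a balanced binary separator tree) of us-number $O(1)$ when laid out with care, giving total $O(1) \cdot O(\log n / \log\log n)$.

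\textbf{Main obstacle.} The delicate point — and where I expect to spend the most effort — is the color accounting: naively, a separator hierarchy of height $h$ costs $\Theta(h)$ colors, and a tree of height $h = \log n$ costs $\Theta(\log n)$, not $\Theta(\log n/\log\log n)$. The saving must come from the fact that at each recursion level the "piece" being separated shrinks \emph{superpolynomially fast in terms of the color budget spent so far} — i.e., one should spend $c$ extra colors only when the remaining size drops from $N$ to roughly $N^{1-\Omega(1)}$ or $N / (\text{superconstant})$, so that after $j$ levels the size is at most $n / (\text{something})^{j}$ with the base growing. Getting the precise trade-off so that $\sum (\text{colors at level } j) = O(\log n/\log\log n)$ while component sizes reach $1$ is the crux; I would calibrate the per-level separator size as a function of the current component size (larger components get larger but amortized-cheaper separators) and verify the telescoping sum, which should match the lower-bound construction implicit in the tightness claim.
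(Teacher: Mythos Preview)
Your proposal has a genuine gap: you never actually reach $O(\log n/\log\log n)$. Your own accounting of the layered-separator scheme yields $O(\log n)$, and the ``refinement'' --- that the separators extracted at a single level can be us-colored with $O(1)$ colors --- is asserted, not proved, and is false in general. There is also a correctness slip in your verification: when $u,w\in S$ share a common neighbor $v\notin S$, the triple is \emph{not} handled by recursion on a single component (since $u,w$ have been removed); $v$ then carries a strictly lower recursive color and $(u,v,w)$ violates the us condition unless you additionally forbid equal colors on $S$-vertices at distance~$2$ in $T$, a constraint strictly stronger than us-coloring $T[S]$ and one that raises the per-level cost. More fundamentally, the separator paradigm is intrinsically loose for this problem: on a path $P_n$ the centroid hierarchy has depth $\log_2 n$, yet $\us(P_n)\le 3$, so any scheme that ties the color count to the depth of a balanced separator tree cannot hit the target bound. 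The ``telescoping'' you hope for does not materialize because, as you can check, for a length-two path $(u,v,w)$ the centroid-depth of $v$ need not be the smallest of the three, so depths cannot be collapsed via a non-injective relabeling.

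The paper's argument is completely different and non-constructive. It proves the contrapositive: any tree with $\us(T)=k$ must have at least $(k-2)!$ vertices. The device is a predicate $p_i(T,r)$, true when for every pair $l,m\in\{i,\dots,k-1\}$ there exists a us-coloring of $T$ with $k-1$ colors in which the root $r$ gets color $l$ and no child of $r$ gets color $m$. Setting $f(i)=\min\{|V(T)|:p_i(T,r)=0\}$, one proves $f(i)\ge 1+(k-i-1)f(i-1)$: if a minimal witness $T$ had fewer vertices, at most $k-i-2$ of the root's subtrees could have $p_{i-1}=0$; by minimality the remaining subtrees have $p_i=1$, and together these flexibilities let one realize any forbidden pair $(l,m)$ at the root, contradicting $p_i(T,u)=0$. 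Unwinding gives $f(k-1)>(k-2)!$, hence $k=O(\log n/\log\log n)$. The factorial --- and with it the $\log\log$ saving --- comes from the freedom to choose the root's color from a palette that shrinks only by one at each level of the tree; a top-down separator scheme, which commits the single highest color to the separator, discards precisely this freedom.
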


\begin{theorem}
For any integer $n$ there exists a tree $T$ with $n$ vertices such that $\us(T)=\Omega(\frac{\log{n}}{\log{\log{n}}})$
\end{theorem}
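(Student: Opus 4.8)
The plan is to build, for every $k$, a tree $D_k$ with $\us(D_k)\ge k$ and $|D_k|$ of order $k^{k}$; since then $\log|D_k| = O(k\log k)$ while also $k=O(\log|D_k|)$, one gets $k=\Omega(\log|D_k|/\log\log|D_k|)$, and padding $D_k$ with pendant vertices — which cannot decrease $\us$, as $\us$ is monotone under subgraphs — produces a tree on exactly $n$ vertices with the claimed bound for every (large) $n$. The tree $D_k$ is defined by recursion on a level parameter $s\in\{1,\dots,k\}$, with $k$ fixed throughout: $D_1$ is a single vertex, and for $s\ge 2$ the tree $D_s$ has a root $r$ joined to $k-s+3$ children, each being the root of a fresh copy of $D_{s-1}$. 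The final tree is $T:=D_k$.

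The heart of the argument is the \emph{strengthened} inductive claim: in every us-coloring of $D_s$ that uses colors from $[k]$, the root of $D_s$ receives a color $\ge s$. This is what makes an induction possible; inducting directly on the statement ``$\us(D_s)\ge s$'' fails, because a subtree colored with the full palette $[k]$ is under no constraint from that weaker statement, and the interaction at the root does not then force anything. For the inductive step, fix a us-coloring $c$ of $D_s$ with colors in $[k]$ and suppose $c(r)=s'\le s-1$. Restricting $c$ to a child-subtree is a us-coloring of $D_{s-1}$ with colors in $[k]$ (restrictions of us-colorings to induced subgraphs are again us-colorings), so by the inductive hypothesis every child $v_i$ of $r$ satisfies $c(v_i)\ge s-1$; combined with $c(v_i)\ne c(r)=s'$ this forces $c(v_i)>s'$. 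Consequently, for distinct children $v_i,v_j$ the length-two path $v_i-r-v_j$ has endpoints that, if equally colored, would need $c(r)$ strictly larger than that color — impossible since all child colors exceed $c(r)$ — so the children's colors are pairwise distinct and all lie in $\{s-1,\dots,k\}$, a set of size $k-s+2$. But $r$ has $k-s+3$ children, a contradiction; hence $c(r)\ge s$.

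Applying the claim with $s=k$ immediately gives the lower bound on $\us$ with \emph{no} appeal to any upper bound on $\us(D_k)$: if $D_k$ admitted a us-coloring with $k-1$ colors, then viewing those colors as a subset of $[k]$ the claim would force the root's color to be $\ge k$, which is absurd. Therefore $\us(D_k)\ge k$. For the size, $|D_k|=\sum_{s=2}^{k}\prod_{j=s}^{k}(k-j+3)+1\le \big(1+o(1)\big)\prod_{j=2}^{k}(k-j+3)\le (k+1)!$, while trivially $|D_k|\ge 3^{k-1}$; the upper bound yields $\log n\le k\log(k+1)$ and the lower bound yields $k=O(\log n)$, so $k=\Omega(\log n/\log\log n)$ where $n=|D_k|$. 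Choosing, for a given $n$, the largest $k$ with $|D_k|\le n$ and padding with $n-|D_k|$ pendant vertices finishes the proof.

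The only genuine obstacle I anticipate is discovering the correct invariant — ``root color $\ge s$ in every $[k]$-coloring'' — together with the matching branching factor $k-s+3$ that makes the pigeonhole step close. Once that statement is in hand, the induction is a short pigeonhole argument, and the passage to $\us(D_k)\ge k$ (exploiting that a $(k-1)$-coloring is a $[k]$-coloring) and then to arbitrary $n$ is routine bookkeeping. One small point to get right is that $|D_s|$ must be controlled by the \emph{product} of the branching factors rather than their maximum raised to the depth, but since the factors are $3,4,\dots,k+1$ this only costs a factor of $(k+1)!$ versus $(k+1)^k$, which is immaterial for the $\log n/\log\log n$ conclusion.
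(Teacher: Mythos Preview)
Your proof is correct and follows essentially the same strategy as the paper's: prove by induction that in any us-coloring with the palette $[k]$, vertices far from the leaves are forced to receive high colors, via a pigeonhole argument on the (necessarily distinct and high) colors of the children. The paper uses the complete $k$-ary tree of depth $k-1$ and phrases the invariant as ``color $i$ appears only in the bottom $i$ levels,'' which is the contrapositive of your ``the root of the depth-$s$ subtree gets color $\ge s$''; your variable-branching construction $D_s$ (with $k-s+3$ children at level $s$) is a tighter version of the same idea, trimming the tree to roughly $(k+1)!$ vertices instead of $k^k$, but both yield the same $\Omega(\log n/\log\log n)$ bound.
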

\subsection{Upper bound}

Let $k > 3$ be some integer. It suffices to show that if a tree $T$ has $\us(T)=k$ then $|V(T)|\ge \Omega((k-2)!)$, because using standard bounds on $k!$ we have that $\log n = \Omega(k\log k)$. To this end, define for each $1\le i\le k-1$ a boolean function $p_i$ on the set of all rooted trees as follows: For a tree $T$ with root $r$, $p_i(T,r)=1$ if and only if for every two colors $i\leq l, m \leq k-1$, there is a us-coloring of $T$ with $k-1$ colors, so that $r$ has color $l$, and none of its children have color $m$. Because to have $p_i(T,r)=1$ it must be that $T$ admits a us-coloring with $k-1$ colors, we have the following:
\begin{observation}\label{ob:pi}
If $\us(T)=k$ then $p_i(T,r)=0$ for every $i$ and every $r\in V(T)$.
\end{observation}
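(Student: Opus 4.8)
The plan is to argue by contradiction, simply unpacking the definition of $p_i$. Suppose $\us(T)=k$ but $p_i(T,r)=1$ for some $1\le i\le k-1$ and some $r\in V(T)$ (where $T$ is rooted at $r$). By definition, $p_i(T,r)=1$ asserts that for \emph{every} pair of colors $l,m$ with $i\le l,m\le k-1$ there exists a us-coloring of $T$ using only $k-1$ colors, satisfying the extra conditions that $r$ gets color $l$ and no child of $r$ gets color $m$.

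The one point worth making explicit is that this universally quantified condition ranges over a nonempty index set: since $i\le k-1$, the pair $l=m=k-1$ is a legal choice. Hence $p_i(T,r)=1$ in particular produces \emph{at least one} us-coloring of $T$ with $k-1$ colors. This directly contradicts $\us(T)=k$, which by definition means $T$ has no us-coloring with fewer than $k$ colors. Therefore $p_i(T,r)=0$ for every $i$ and every $r\in V(T)$.

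I do not expect any real obstacle here; the statement is essentially immediate once the definition of $p_i$ is expanded, and indeed the paragraph preceding the observation already records the crucial remark that $p_i(T,r)=1$ forces $T$ to be us-colorable with $k-1$ colors. The only thing a careful write-up should mention is the nonemptiness of the set of admissible $(l,m)$ pairs, so that the definition of $p_i(T,r)=1$ is not vacuously satisfiable.
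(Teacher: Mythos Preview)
Your proof is correct and is exactly the paper's reasoning: the sentence immediately preceding the observation already notes that $p_i(T,r)=1$ forces $T$ to admit a us-coloring with $k-1$ colors, and the observation is stated without further proof. Your added remark about the nonemptiness of the range $i\le l,m\le k-1$ is a nice point of care, but otherwise the arguments coincide.
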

Define the function $f:[k-1]\to\N$ by
\[
f(i)=\min\{n:\textrm{ there exists a tree $T$ on $n$ vertices with root $r\in V(T)$ such that } p_i(T,r)=0\}~.
\]
By \obref{ob:pi}, if $\chi_{us}(T)=k$ then $|V(T)| \geq f(k-1)$, thus proving the following Lemma will conclude the proof.
\begin{lemma}
$f(k-1)>(k-2)!$ .
\end{lemma}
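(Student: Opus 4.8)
The plan is to prove, by induction on $i\in\{1,\dots,k-1\}$, the more general statement $f(i)>(i-1)!$; the Lemma is the case $i=k-1$. The base cases $i=1,2$ are immediate, since a rooted tree with at most one vertex satisfies $p_i(T,r)=1$ for every $i$ (there is nothing to color besides the root), so $f(1),f(2)\ge 2>(i-1)!$ there.

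For the inductive step, assume $f(i-1)>(i-2)!$ and take a rooted tree $(T,r)$ of \emph{minimum} size among those with $p_i(T,r)=0$; the goal is $|V(T)|>(i-1)!$. Let $T_1,\dots,T_d$ be the subtrees hanging from the children $u_1,\dots,u_d$ of $r$. By minimality of $T$, every $(T_a,u_a)$ has $p_i(T_a,u_a)=1$. Call $T_a$ \emph{deficient} if $p_{i-1}(T_a,u_a)=0$; by the inductive hypothesis a deficient subtree has more than $(i-2)!$ vertices. The crux will be the structural claim that $r$ has at least $i-1$ deficient subtrees. Granting it, $|V(T)|\ge 1+(i-1)\bigl((i-2)!+1\bigr)>(i-1)!$, and the induction closes.

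To prove the structural claim I would argue by contradiction: assume at most $i-2$ subtrees are deficient, and produce, for every pair $l,m\in\{i,\dots,k-1\}$, a us-coloring of $T$ with $k-1$ colors in which $r$ gets $l$ and no child of $r$ gets $m$, contradicting $p_i(T,r)=0$. Put $c(r)=l$. If $\{i,\dots,l-1\}\setminus\{m\}\ne\emptyset$, pick $c^{*}$ in it and color every $u_a$ with $c^{*}$: since $c^{*}<l$ there is no conflict among the children, and since $p_i(T_a,u_a)=1$ with $c^{*},l\in\{i,\dots,k-1\}$, each $T_a$ has a us-coloring with $u_a\mapsto c^{*}$ in which the children of $u_a$ avoid $l$; a routine inspection of the length-two paths through $r$ (the paths $(u_a,r,u_b)$ and $(r,u_a,w)$) shows the whole thing is a us-coloring. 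The remaining case $\{i,\dots,l-1\}\setminus\{m\}=\emptyset$ happens only for $l=i$ or $(l,m)=(i+1,i)$; there I would give each \emph{non}-deficient subtree root the color $i-1$ (legitimate since $i-1\notin\{l,m\}$, using $p_{i-1}(T_a,u_a)=1$ with $i-1,l\in\{i-1,\dots,k-1\}$ so the children of $u_a$ avoid $l$) and give the at most $i-2$ deficient subtree roots distinct colors from $\{l+1,\dots,k-1\}\setminus\{m\}$, each realized as a root color of $T_a$ via $p_i(T_a,u_a)=1$.

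The hard part is precisely this last assignment. In the hard case a deficient subtree may only receive a ``high'' color from $\{l+1,\dots,k-1\}\setminus\{m\}$, each usable once, and a naive count of these against the number of deficient subtrees is comfortable only when $i$ is bounded away from $k-1$. To push it through for all $i$ one must exploit the finer fact that a subtree which is $p_i$-good but $p_{i-1}$-bad can fail to admit the needed coloring only for pairs $(l',m')$ with $l'=i-1$ or $m'=i-1$, so that most deficient subtrees can after all be colored with root color $i-1$ and children avoiding $l$, and only a controlled handful genuinely compete for the high colors. Getting this refined bookkeeping right — and sweeping through every configuration of $(l,m)$, together with the easy verifications that the constructed colorings really are us-colorings — is where the work lies; the reduction to the structural claim and the base cases are straightforward.
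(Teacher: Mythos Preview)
Your framework---take a minimal $(T,r)$ with $p_i(T,r)=0$ and count the deficient subtrees (those with $p_{i-1}(T_a,u_a)=0$)---is exactly right, but the recursion you chase is the wrong one, and the gap you flag at the end is real. In the hard case you must hand the deficient roots pairwise distinct colors from $\{l+1,\dots,k-1\}\setminus\{m\}$, a set of size roughly $k-i-2$ since $l\in\{i,i+1\}$; so the contradiction with ``at most $i-2$ deficient subtrees'' closes only for $i\le k/2$. Your rescue for large $i$ fails: it is true that a $p_i$-good but $p_{i-1}$-bad subtree can refuse only pairs $(l',m')$ with $l'=i-1$ or $m'=i-1$, but the pair you actually need for such a subtree is $(i-1,l)$, which has $l'=i-1$ and is thus precisely one of the potentially refused pairs. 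Nothing prevents \emph{every} deficient subtree from refusing exactly this pair, so there is no argument that ``most'' of them accept root color $i-1$.

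The paper runs the same construction but reads off the other inequality. For an arbitrary pair $l,m\in\{i,\dots,k-1\}$ (no case split), color each non-deficient $u_a$ with $i-1$ (via $p_{i-1}(T_a,u_a)=1$, with its children avoiding $l$) and give the deficient $u_a$'s pairwise distinct colors from $A=\{i,\dots,k-1\}\setminus\{l,m\}$ (via $p_i(T_a,u_a)=1$, which holds by minimality of $T$). This succeeds whenever the number of deficient subtrees is at most $|A|=k-i-2$; the contradiction therefore yields at least $k-i-1$ deficient subtrees, i.e.\ $f(i)\ge 1+(k-i-1)f(i-1)$. Iterating for $2\le i\le k-3$ gives $f(k-3)>(k-3)!\,f(1)$, and together with $f(1)=k-2$ and the monotonicity $f(k-1)\ge f(k-3)$ one obtains $f(k-1)>(k-2)!$. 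The multiplier in the recursion is $k-i-1$, not $i-1$; the factorial accumulates at the small-$i$ end, which is exactly where your argument was already unproblematic.
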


\begin{proof}
We show that for every $1 \leq i \leq k-3$,
\[
f(i) \geq 1+(k-i-1)f(i-1)~.
\]
Let $T$ be a tree with root $u$, such that $p_i(T,u)=0$ and $|V(T)|=f(i)$.
Denote by $u_1, \dots u_s$ the children of $u$, and by $T_{u_j}$ the sub-tree of $T$ rooted in $u_j$.
Seeking contradiction, assume that $f(i)< 1+(k-i-1)f(i-1)$. Then there are at most $k-i-2$ trees $T_{u_j}$ for which $p_{i-1}(T_{u_j}, u_j)=0$, since
the number of vertices in each such tree is at least $f(i-1)$. By renaming we may assume that for $1 \leq j \leq s'$, $p_{i-1}(T_{u_j},u_j)=0$, and for $s'+1 \leq j \leq s$, $p_{i-1}(T_{u_j},u_j)=1$, where $s' \leq k-i-2$.

Fix any $i \leq l,m \leq k-1$. We demonstrate that there is a us-coloring $c$ of $T$ with at most $k-1$ colors, so that $c(u)=l$ and $c(u_j) \neq m$ for all $j\in[s]$. First, for each $j>s'$, as $p_{i-1}(T_{u_j},u_j)=1$ there is a coloring $c_j$ of $T_{u_j}$ such that $c_j(u_j)=i-1$ and $c_j(w) \neq l$ for every child $w$ of $u_j$. As for $1 \leq j \leq s'$, we use the fact that $T$ has the minimal number of vertices for which $p_i(T,u)=0$, thus it must be that $p_i(T_{u_j},u_j)=1$. This implies that for every color $d$ out of the color set $A=\{i, \dots k-1\} \setminus \{l,m\}$ there exists a us-coloring $c_j$ of $T_{u_j}$ so that $c_j(u_j)=d$ and $c_j(w) \neq l$ for every child $w$ of $u_j$. As $|A|=k-i-2$, we can choose a color $d(j)$ such that $d(j)\neq d(j')$ for all $s'<j<j'\le s$. Finally, color the root $u$ with color $l$. See Figure~\ref{some-label} for an illustration. Then it can be verified that the obtained coloring $c$ is a valid us-coloring of $T$ satisfying the constraints imposed by the definition of $p_i(T,r)$. But this means that $p_i(T,u)=1$, a contradiction.
Using that recursive formula, we have $f(k-3)>(k-3)!f(1)$. One can check that $f(1)=k-2$, so $f(k-3)>(k-2)!$, and of course also $f(k-1)>(k-2)!$ .
\end{proof}

\begin{figure}
    \centering
    \includegraphics[width=5in]{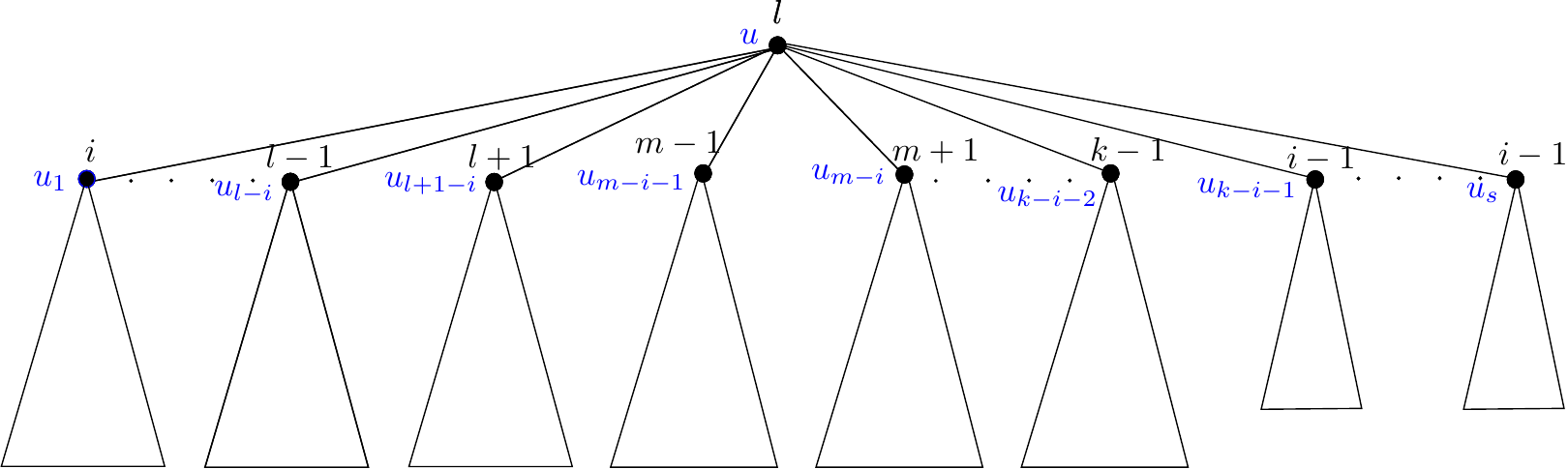}
    \caption{\label{some-label} $s'=k-i-2$, and $l\le m$. Not shown in the figure are the grandchildren of $u$, all of which have colors different from l.}
\end{figure}

\subsection{Lower bound}
Next, we show that for every $n$, there exist a tree $T$
on $n$ vertices such that $\us(T) = \Omega(\frac{\log n}{\log \log n})$.
Let $T=T_k$ be the \textit{complete $k$-ary tree with $k$ levels}, i.e., a rooted tree of height $k-1$, where each non-leaf has exactly $k$ \textit{children}, and all leaves have distance $k-1$ from the root. Since $|V(T_k)|=\frac{k^k-1}{k-1}$ it follows that $k=\Omega(\frac{\log n}{\log \log n})$. The {\em level} of a vertex is its distance to the root, so the root is at level $0$, its children
are at level $1$, and the leaves are at level $k-1$. See Figure \ref{Tree} for an illustration with $k=3$. We shall prove that $\us(T_k)=k=\Omega(\frac{\log n}{\log \log n})$.

\begin{figure}[h]
    \centering
    \includegraphics[width=3in]{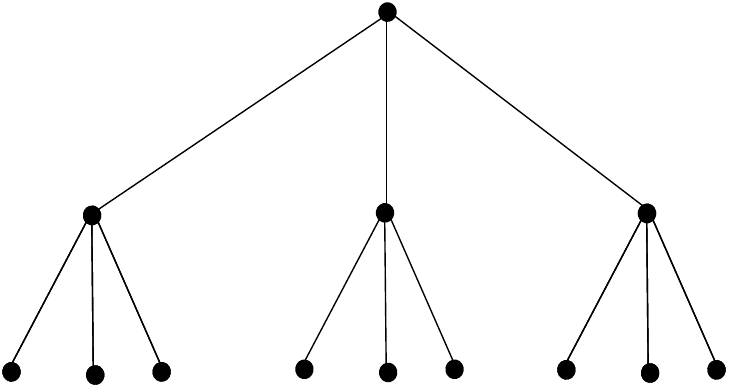}
    \caption{\label{Tree} $T_3$.}
\end{figure}

\begin{lemma}\label{tree-lemma}
Let $c$ be a us-coloring of $T_k$ with $k$ colors. Then for every
$1 \leq i \leq k$, color $i$ can be used only for vertices from the bottom $i$ levels.
\end{lemma}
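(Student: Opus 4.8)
The plan is to induct on the color index $i$. The case $i=k$ is vacuous, since the bottom $k$ levels of $T_k$ comprise every one of its $k$ levels, so I will take as induction hypothesis, for a fixed $1\le i\le k-1$, that for each $j<i$ color $j$ occurs only among the bottom $j$ levels of $T_k$; for $i=1$ this hypothesis is empty and serves as the base case.

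For the inductive step I would argue by contradiction. Suppose a vertex $v$ with $c(v)=i$ sits at some level $\ell\le k-i-1$, i.e.\ strictly above the bottom $i$ levels. Since $\ell\le k-2$, the vertex $v$ is not a leaf, hence it has all $k$ of its children, each at level $\ell+1\le k-i$. The key observation is that level $\ell+1$ lies strictly above the bottom $i-1$ levels of $T_k$ (which are exactly the levels $k-i+1,\dots,k-1$), because $\ell+1\le k-i<k-i+1$; so by the induction hypothesis none of these children carries a color from $\{1,\dots,i-1\}$, and since $c$ is a proper coloring none carries color $i$ either. Thus all $k$ children receive colors from the $(k-i)$-element set $\{i+1,\dots,k\}$, and since $k>k-i$, the pigeonhole principle yields two distinct children $w_1\ne w_2$ with $c(w_1)=c(w_2)=j$ for some $j\ge i+1$. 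But then $(w_1,v,w_2)$ is a simple path of length two with equally colored endpoints, so the us-coloring condition forces $c(v)>j$, i.e.\ $i>j\ge i+1$, a contradiction. Hence color $i$ appears only at levels $\ge k-i$, which is the claim for $i$, and the induction is complete.

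The argument is essentially forced once the indexing is set up correctly; the only thing to watch is the off-by-one translation between ``bottom $i$ levels'' and the numerical level of a vertex, together with the observation that the offending vertex $v$ must be internal — which is precisely what guarantees it has the full complement of $k$ children needed to run the pigeonhole step. I do not expect a deeper obstacle here; the more substantive use of this lemma will presumably come afterward, when it is combined with a counting argument over the levels of $T_k$ to force $\us(T_k)\ge k$.
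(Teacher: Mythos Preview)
Your proof is correct and follows essentially the same inductive argument as the paper: both use the induction hypothesis (together with properness) to force all $k$ children of an offending vertex $v$ into the color set $\{i+1,\dots,k\}$, and then derive a contradiction from the fact that these children must be pairwise differently colored (the paper phrases this as ``$k+1$ colors are used'', you phrase the contrapositive via pigeonhole and a violating length-two path). The only cosmetic difference is that the paper treats $i=1$ as a separate base case, whereas you absorb it into the inductive step with an empty hypothesis; both are fine.
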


\begin{proof}

The proof is by induction on $i$. The base case is for $i=1$, and seeking contradiction, assume that $c(v)=1$ and $v$ is not a leaf. Note that $v$ has exactly $k$ children $u_1,\dots,u_k$. Since $c(v)=1$ and the coloring is proper, then $c(u_l)>1$, for all
$l \in [k]$. Furthermore, $c(u_l) \neq c(u_m)$ for every $1 \leq l < m \leq k$, since $v$ is a common neighbor of $u_l$ and $u_m$ with the lowest color. Hence, at least $k+1$ colors are used, a contradiction.

The induction hypothesis is that the statement holds for all $j<i$. Seeking contradiction, assume that $c(v)=i$ for some vertex $v \in V(T_k)$ which is not in the bottom $i$ levels. Denote by $u_1,\dots,u_k$ the children of $v$. We know that $c(u_l)>i-1$ for every $1 \leq l \leq k$ by the induction hypothesis, since $u_l$ is not in the bottom $i-1$ levels. Moreover, for every $l \in [k]$, since $c$ is also a proper coloring then $c(u_l) \neq c(v)=i$,
and we conclude that $c(u_l)>i$.
This means that $c(u_l) \neq c(v)$ and $c(u_l) \neq c(u_m)$ for every $1 \leq l < m \leq k$. Again, this implies that $k+1$ colors are used for $v,u_1,\dots,u_k$, a contradiction. This completes the induction step, proving the lemma.
\end{proof}

Now we are ready to prove that $\us(T_k)=k$.
It can be checked that $\us(T_k) \leq k$, since the coloring which assigns every vertex of level $i$ the color $k-i$ is a us-coloring of $T_k$.
To see that $\us(T_k) \geq k$, we note that applying \lemmaref{tree-lemma} on the root $r$ suggests that it can be assigned only colors which are at least $k$.

\section{Planar Graphs}\label{sec:planar}

In this section we provide a nearly sharp asymptotic upper-bound on the us-number and the $l$-vr-number of planar graphs.

\subsection{The us-number of Planar Graphs}

\begin{theorem} \label{thm:planar}
For any planar graph $G$ on $n$ vertices, $\us(G) = O(\log n)$.
\end{theorem}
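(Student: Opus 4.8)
The plan is to exploit the separator structure of planar graphs together with the observation already recorded in the preliminaries, namely that a proper coloring of $G^2$ is a us-coloring of $G$. The key point is that for a planar graph $G$, the square $G^2$ is still "small" in a useful sense on every bounded-radius piece, so a divide-and-conquer recursion on the Lipton--Tarjan separator theorem will produce a us-coloring with $O(\log n)$ colors. Concretely, I would set up a recursion: given a planar $G$ on $n$ vertices, find a balanced separator $S$ with $|S| = O(\sqrt n)$ whose removal splits $G$ into pieces $A$ and $B$, each with at most $2n/3$ vertices. Recursively us-color $A$ and $B$ using the same palette $[t(2n/3)]$ for some function $t$, and then assign the separator $S$ a block of fresh top colors. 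The subtlety is that simply giving all of $S$ one fresh color does not work, because $S$ itself, viewed inside $G$, has length-two paths between its own vertices (through vertices of $A\cup B$), so those vertices of $S$ that are pairwise at distance $\le 2$ in $G$ must receive distinct colors among themselves as well.

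So the real work is: bound the number of extra colors needed to handle the separator layer. I would argue that the separator vertices only need $O(\sqrt n)$ fresh colors in the worst case for a single level, but one must be careful that the total over all $\log$-many levels does not blow up past $O(\log n)$. The trick is the standard one for ranking-type recursions: the fresh colors used at a given level of the recursion need only be larger than everything below, but they can be reused across the (disjoint) subproblems at that level. Thus if $g(n)$ is the number of fresh colors the separator layer of an $n$-vertex planar graph needs, the recursion is $t(n) \le t(2n/3) + g(n)$, giving $t(n) = O(\sum_{j\ge 0} g((2/3)^j n))$. To get $t(n)=O(\log n)$ I need $g(n) = O(1)$ at each level, which cannot be literally true for one monolithic separator. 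The fix is to choose the separator more cleverly, or equivalently to further color the separator recursively: a planar separator $S$ has the property that $G[S]$-plus-distance-two structure is itself planar-like or at least sparse, so one can peel $S$ with only a constant number of colors per sub-level; summing a geometric-ish series again gives $O(\log |S|) = O(\log n)$ for the separator, but this would give $O(\log^2 n)$ overall unless one is careful.

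To actually land at $O(\log n)$, the cleanest route I would pursue is the layering/BFS approach rather than a pure separator recursion: partition $V(G)$ into BFS layers $L_0, L_1, \dots$ by distance from an arbitrary root; any length-two path in $G$ stays within three consecutive layers, so it suffices to us-color each "triple band" $L_{j-1}\cup L_j\cup L_{j+1}$ and then combine via a constant number of color classes mod $3$ or mod $6$ on the band index (a la the classical planar layering arguments). Each band, however, can still be large, so inside a band I would again recurse using separators — but now crucially a band induces a graph whose square has bounded "local" complexity because it has bounded diameter in the original metric; here one can invoke that outerplanar-like / bounded-radius planar graphs have squares of bounded treewidth, or more simply iterate the separator recursion where each level genuinely removes a $\Theta(\sqrt{\cdot})$ fraction's worth of "distance-2 degree." The honest statement of the recursion I expect is $\us(G) \le \us(G') + O(1)$ where $G'$ is obtained by deleting a separator, with the $O(1)$ justified because the deleted separator, within the remaining bounded-radius context, contributes only $O(1)$ to the distance-2 chromatic demand at that step — summing over the $O(\log n)$ recursion depth gives the bound.

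The main obstacle, and the place I'd spend the most care, is precisely this bookkeeping: showing that the fresh colors needed per recursion level are $O(1)$ (not $O(\sqrt n)$) so that the depth-$O(\log n)$ recursion yields $O(\log n)$ total, rather than a weaker $O(\sqrt n)$ or $O(\log^2 n)$. I expect this hinges on combining the separator theorem with the BFS-layering observation so that at each recursive step one deletes a set that is simultaneously a good vertex separator \emph{and} has small distance-two interaction with itself, which is exactly where planarity (as opposed to general sparsity) is used. Once that lemma is in hand, the theorem follows by unwinding $t(n) \le t(2n/3) + O(1)$.
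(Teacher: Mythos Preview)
Your proposal correctly isolates the crux—one needs a recursion $t(n)\le t(\alpha n)+O(1)$—but you never establish the $O(1)$ term, and the concrete claims you offer toward it are either false or insufficient. The assertion that bounded-radius planar graphs have squares of bounded treewidth fails already for the star $K_{1,m}$ (radius $1$, planar, square $=K_{m+1}$); more broadly, any route through $\chi(G^2)$ is hopeless since $\chi(G^2)=\Theta(\Delta(G))$ for planar $G$, and your BFS bands can contain vertices of arbitrary degree. The BFS-layer combination via residues does not close the argument either: a length-two path $(u,v,w)$ with $u,w\in L_j$ and $v\in L_{j-1}$ requires $c(v)>c(u)$, so one cannot simply order color blocks monotonically by layer index. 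You explicitly flag the missing lemma (``a separator with small distance-two interaction with itself''), but you do not prove one, and a Lipton--Tarjan separator has no such property in general.

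The paper's proof of this theorem does not use separators at all. It peels $G$ in $O(\log n)$ rounds by repeatedly removing a large independent set of \emph{low-degree} vertices: Euler's formula gives that at least half the vertices have degree $<12$, and four-colorability then yields an independent subset of size $\ge n/8$. This produces a partition $V_1\cup\dots\cup V_t$, $t=O(\log n)$, in which every $v\in V_i$ has at most $11$ neighbors in $\bigcup_{j>i}V_j$. The $O(1)$ colors per layer then come from \lemmaref{bipartite}: in a planar bipartite graph $(A\cup B,E)$ with every vertex of $A$ of degree $\le b$, one can color $B$ with $6b$ colors so that any two vertices of $B$ sharing a neighbor in $A$ receive distinct colors. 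A separator approach \emph{can} be made to work, but it requires Thorup's theorem that a planar graph has a $1/2$-separator consisting of three shortest paths in a BFS tree—precisely the ``small distance-two interaction'' structure you were looking for—and the paper carries that out for general $l$ in \theoremref{thm:vr-l-planar}; your sketch gestures at this but does not identify the needed ingredient.
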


Before presenting the proof of \theoremref{thm:planar}, we give an informal sketch.
Given a planar graph $G=(V,E)$, we begin by partitioning the set of vertices into $t=O(\log n)$ independent sets
$V = V_1\cup \cdots \cup V_t$ with the following property: For every $1\leq i < j \leq t$, any vertex $v \in V_i$ has at most $O(1)$ neighbors in $V_j$. For each of the sets $V_j$ we shall use a distinct palette of $O(1)$ colors, such that $V_i$ will have smaller colors than $V_j$ for all $1\le i<j\le t$. We shall exhibit a coloring of each set $V_j$ with the property that whenever two vertices $u,v \in V_j$ have a common neighbor in $V_i$ (for $i < j$) then $u$ and $v$ will get distinct colors.

We need the following crucial lemma which provides a coloring with the required property for each of the $V_j$.

\begin{lemma} \label{bipartite}
Let $G=(A \cup B, E)$ be a bipartite planar graph with bipartition $A$ and $B$. Assume further that every vertex in $A$ has degree at most $b$ for some fixed integer $b$. Then there exists a coloring $c:B \rightarrow [6b]$ of the vertices in $B$,
such that if $u,v\in B$ have a common neighbor (in $A$) then $c(u)\neq c(v)$.
\end{lemma}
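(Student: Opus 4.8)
The plan is to pass to an auxiliary graph and colour it greedily. Define $H$ to be the graph on vertex set $B$ in which $u\sim v$ exactly when $u$ and $v$ have a common neighbour in $A$. A colouring $c:B\to[6b]$ of the type required by the lemma is precisely a proper colouring of $H$, so it suffices to prove $\chi(H)\le 6b$, and I would get this by showing that $H$ is $(6b-1)$-degenerate and then colouring $B$ greedily along a degeneracy ordering, exactly as in Proposition~\ref{order}.

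To bound the degeneracy, fix a nonempty $B'\subseteq B$; the goal is to produce $v\in B'$ with $\deg_{H[B']}(v)\le 6b-1$. If $|B'|\le 2$ this is trivial, so assume $|B'|\ge 3$. Let $A''=\{a\in A:\ |N_G(a)\cap B'|\ge 2\}$ (only these vertices of $A$ create edges in $H[B']$), and for $a\in A''$ write $d_a=|N_G(a)\cap B'|\in[2,b]$. The graph $\tilde G=G[A''\cup B']$ is bipartite planar, hence has at most $2(|A''|+|B'|)-4$ edges; since this number equals $\sum_{a\in A''}d_a$, we get $\sum_{a\in A''}(d_a-2)\le 2|B'|-4$, so in particular $|\{a\in A'':d_a\ge 3\}|\le 2|B'|-4$. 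For the vertices with $d_a=2$ I would first remove redundancy: several $a$'s with the same pair of neighbours contribute the same clique to $H[B']$, so choose $\hat A\subseteq A''$ for which the cliques $\{N_G(a)\cap B':a\in\hat A\}$ still cover all of $H[B']$ and for which the degree-$2$ members $\hat A_2$ of $\hat A$ carry pairwise distinct pairs. Suppressing those degree-$2$ vertices inside the planar graph $\tilde G$ displays these pairs as the edge set of a simple planar graph on $B'$, whence $|\hat A_2|\le 3|B'|-6$.

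The crucial estimate is a double count. Since $\deg_{H[B']}(v)\le\sum_{a\in\hat A,\ v\in N_G(a)}(d_a-1)$, summing over $v\in B'$ gives $\sum_{v\in B'}\deg_{H[B']}(v)\le\sum_{a\in\hat A}d_a(d_a-1)$. Split this sum according to $d_a=2$ versus $d_a\ge 3$: the first part is $2|\hat A_2|\le 2(3|B'|-6)$, while for the second part use $d_a(d_a-1)\le(d_a-2)(b-1)+2(b-1)$ together with $\sum_{d_a\ge 3}(d_a-2)\le 2|B'|-4$ and $|\{a:d_a\ge 3\}|\le 2|B'|-4$. Adding the contributions yields $\sum_{v\in B'}\deg_{H[B']}(v)<6b\,|B'|$, so some $v\in B'$ has $\deg_{H[B']}(v)\le 6b-1$, as needed; hence $H$ is $(6b-1)$-degenerate and $\chi(H)\le 6b$.

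The main obstacle — and the reason this is not immediate — is that the naive bound $\deg_{H[B']}(v)\le(b-1)\deg_{\tilde G}(v)$ is by itself worthless: a bipartite planar graph can have arbitrarily many vertices of $A$ all sharing the same two neighbours (as in $K_{2,n}$), so $\deg_{\tilde G}(v)$ need not be small for any $v\in B'$ even when $H[B']$ is almost edgeless. The remedy — isolating the degree-$2$ vertices of $A''$, deduplicating them via planarity of the suppressed graph, and controlling the few remaining high-degree vertices through the Euler bound — is the technical heart of the argument, and it is precisely where planarity enters (the same scheme with the bound $|E|=O(|V|)$ for a fixed-minor-free family would yield $O(b)$ colours).
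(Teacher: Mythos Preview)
Your argument is correct and yields the same constant $6b$, but it takes a genuinely different route from the paper's proof. The paper invokes a structural lemma (from Diestel) that re-embeds the bipartite planar graph as a plane graph $G'$ on $B$ alone, with the vertices of $A$ in bijection with a set of faces of $G'$; the required auxiliary graph is then obtained by adding a clique on each such face. Since a $b$-clique decomposes into $b-1$ stars, each of which can be drawn inside its face, the auxiliary graph is a union of at most $b-1$ planar graphs and hence $(6b-1)$-degenerate.

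Your proof avoids this topological re-embedding entirely and works by direct edge counting: Euler's formula on the bipartite planar graph $\tilde G$ controls the high-degree vertices of $A''$ via $\sum_a(d_a-2)\le 2|B'|-4$, while the degree-$2$ vertices of $A''$ are handled by deduplication and suppression to a simple planar graph on $B'$. The split $d_a(d_a-1)\le d_a(b-1)$ for $d_a\ge 3$ then closes the bookkeeping cleanly (indeed $2|E(H[B'])|\le 6b(|B'|-2)$). The observation that the naive bound fails because of $K_{2,n}$-type configurations, and that deduplication plus planarity of the suppressed graph is exactly the fix, is the right diagnosis.

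What each approach buys: the paper's version is short once the face--vertex correspondence is granted, and makes the ``union of few planar graphs'' structure explicit. Your version is more elementary---it uses only the edge bound for (bipartite) planar graphs and no auxiliary embedding lemma---and, as you note at the end, ports verbatim to any hereditary class with a linear edge bound. In fact this is close in spirit to how the paper handles the $K_r$-minor-free case in Lemma~\ref{bipartite-minor}, though there it uses an edge-contraction trick rather than your counting scheme.
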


\begin{proof}
By a lemma that can be found in, e.g., \cite{D02}, there exists a plane graph $G'=(B,E')$ with a set of faces $F$ and a bijection $g:A\to F'$ where $F' \subset F$, that satisfies the following property: For any two vertices $x \in A$, $y \in B$ we have that $\{x,y\}\in E(G)$ if and only if $y$ is incident in $G'$ with the face $f=g(x)$.
Our goal is to show that there exists a coloring of the vertices of $G'$ with $6b$ colors, such that all vertices incident on a face $f\in F'$ get distinct colors. We define the auxiliary graph $G''$ by adding edges to $G'$. For every $f\in F'$ we place edges between any pair of vertices on $f$.

It remains to prove that $\chi(G'') \leq 6b$. We do this by showing that $G''$ is $(6b-1)$-degenerate.
Note that $G''$ is obtained from $G'$ by adding cliques to a subset of the faces. Since the degree in $G$ of any vertex in $A$ is at most $b$, each of the faces in $F'$ has at most $b$ vertices on its boundary. As each clique of size $b$ is contained in a union of $b-1$ trees\footnote{If the vertices of the clique are $v_1,\dots,v_b$, then the tree $T_i$ for $1\le i\le b-1$, will consist of all edges from $v_i$ to the other vertices.}, we have that $G''$ is the union of at most $b-1$ planar graphs. The same argument holds for any subgraph of $G''$, thus, as the average degree of a planar graph is bounded by $6$, the average degree in a union of $b-1$ planar graphs is at most $6(b-1)$, so $G''$ is indeed $(6b-1)$-degenerate and so $6b$ colorable.
\end{proof}

\begin{proof}[Proof of \theoremref{thm:planar}]
We start by partitioning the vertices into $t=O(\log n)$ independent sets as follows:
Let $V' \subset V$ be the subset of vertices with degree less than $12$. It is easy to see that $\cardin{V'} \geq n/2$. Indeed, by Euler's formula it follows that $|E(G)| \leq 3n-6$, and by \propref{ez-lemma}, $\cardin{V \setminus V'} < 6n/12$ and hence $\cardin{V'} \geq n/2$.
Since every planar graph is four-colorable \cite{AH77}, there exists a subset $V'' \subset V'$ of size at least $\frac{\cardin{V'}}{4} \geq n/8$ which is also independent. Put $V_1= V''$ and continue recursively on the subgraph $G[V\setminus V_1]$.
Since in every iteration we discard at least an $1/8$ fraction of the remaining vertices, the final number of sets is at most $t=\log_{8/7}n+1$.

Next, we color each of the sets $V_i$ by applying \lemmaref{bipartite} on the bipartite graph induced by $A=\bigcup_{j=1}^{i-1}V_j$ and $B=V_i$ (that is, discarding the internal edges in $A$ and $B$). Observe that by construction every vertex $y\in A$ has at most $11$ neighbors in $B$, thus there exists a coloring of $V_i$ with $66$ colors, such that if $x,z\in B$ have a common neighbor $y\in A$, then $x,z$ get different colors. The total number of colors used for $V$ is at most $66t= O(\log n)$.

To complete the proof of the theorem, we need to show that the coloring $c$ obtained in this manner is a us-coloring. Indeed, the coloring is proper since each $V_i$ is an independent set, and each $V_i$ has different colors than $V_j$ for $i\neq j$.
Thus, it is only left to check that if we have a path $(x,y,z)$ for which $c(x)=c(z)$ then $c(y) > c(x)$. Since $c(x)=c(z)$ it must be that $x,z\in V_i$ for some $1\le i\le t$, and by \lemmaref{bipartite} it also holds that $y\in V_j$ for some $j\ge i$ (as otherwise the assertion of the Lemma is violated for this triple). Recall that $V_i$ is an independent set, so in fact $j>i$, and then $c(y)>c(x)$ by construction.
\end{proof}

\subsection{The $l$-vr-number of Planar Graphs}
Here we generalize \theoremref{thm:planar} to the $l$-vr-number of planar graphs. The main tool is a planar separator theorem due to \cite{T04}, where the separator consists of three shortest paths.

\begin{definition}
Let $G=(V,E)$ be a graph. An $\alpha$-separator of $G$ is a set of vertices $U \subseteq V$, such that the number of vertices in any connected component of $G\setminus U$ is at most $\alpha|V|$.
\end{definition}

Let $T$ be a rooted tree with root $r$, and let $u$ be some vertex in $T$. Denote by $P_T(u)$ the unique simple path in $T$ from $r$ to $u$.
Before introducing the main theorem of this section, we need the following separator theorem of Thorup~\cite{T04}, based on the planar separator of \cite{LT79}:
\begin{theorem}[\cite{T04}]
\label{paths-separator}
Let $G=(V,E)$ be a connected planar graph with $n$ vertices and let $T$ be some spanning tree of $G$ with root $r$.
Then there exist three vertices $u,v,w$ such that $P_T(u) \cup P_T(v) \cup P_T(w)$ is a $1/2$-separator of $G$.
\end{theorem}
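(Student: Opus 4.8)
The plan is to follow the Lipton--Tarjan planar-separator argument, tuned so that the separator comes out as exactly three root-to-vertex paths. First I would reduce to the case that $G$ is a triangulation: add edges to $G$ until every face (including the outer one) is a triangle. All new edges are non-tree edges, so $T$ is still a spanning tree with root $r$; and if a vertex set $S$ breaks the triangulation into pieces of size at most $n/2$, it does the same to $G$, because each component of $G\setminus S$ is contained in a component of the triangulation minus $S$. So assume henceforth that $G$ is a triangulation.

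Now I would use the tree--cotree decomposition: by Euler's formula, the duals of the edges of $G$ not in $T$ form a spanning tree $T^*$ of the planar dual $G^*$, whose vertices are the triangular faces of $G$. Each face has three incident edges, so it has degree at most $3$ in $T^*$. On $T^*$ I would run a centroid argument: assign each vertex $v$ of $G$ to one arbitrarily chosen incident face $\phi(v)$, and give face $g$ weight $\omega(g)=|\phi^{-1}(g)|$, so $\sum_g\omega(g)=n$. By the standard tree-centroid lemma there is a face $f$ such that every one of the at most three components of $T^*-f$ has $\omega$-weight at most $n/2$.

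Let $a,b,c$ be the vertices of $f$; I claim $S=P_T(a)\cup P_T(b)\cup P_T(c)$ works, i.e.\ I would take $\{u,v,w\}=\{a,b,c\}$. The geometric heart of the proof: as a subgraph of $T$, the set $S$ is a tree (three paths all passing through $r$), so it has one fewer edge than it has vertices; adding the at most three boundary edges of $f$ not already in $S$ yields a plane graph that cuts the plane into at most four regions, one of which is the empty interior of $f$. Hence every vertex of $G$ outside $S$ lies in one of the at most three remaining regions; no edge of $G$ can join two of these regions, since such an edge would have to cross an edge of $S$ or of the boundary of $f$ --- impossible by planarity --- so every connected component of $G\setminus S$ is confined to one region. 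It remains to see that the number of vertices of $G$ in each region is at most $n/2$: one checks that the faces of $G$ lying in a given region are exactly those forming one component of $T^*-f$, and that for any $v\notin S$ the assigned face $\phi(v)$ lies in the same region as $v$; therefore the vertex count of a region is at most the $\omega$-weight of the corresponding component of $T^*-f$, which is at most $n/2$ by the centroid property.

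I expect the main obstacle to be precisely this last bookkeeping --- establishing the correspondence "regions of the plane $\leftrightarrow$ components of $T^*-f$" rigorously, and disposing of the degenerate situations: an edge of $f$ being a tree edge (so two of $a,b,c$ are in a parent--child relation and $S$ is really a union of only two root-paths), the three root-paths sharing a long common initial segment, or $r$ itself being one of $a,b,c$. These only shrink the number of regions below three and do not affect the bound, but they must be handled to conclude that removing the vertex set $S$ genuinely partitions $G$ into the desired weight-balanced pieces.
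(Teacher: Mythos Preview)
The paper does not prove this theorem at all: it is quoted from Thorup~\cite{T04} (building on Lipton--Tarjan~\cite{LT79}) and used as a black box, so there is no ``paper's own proof'' to compare against. Your sketch is essentially the standard tree--cotree / fundamental-cycle argument behind Thorup's result --- triangulate, take a weighted centroid of the cotree $T^*$, and use the three root-paths to the corners of the centroid face --- and it is correct in outline; the degenerate cases you flag (a boundary edge of $f$ being a tree edge, shared prefixes of the three root-paths) indeed only reduce the number of regions and cause no trouble.
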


The result of this section is the following theorem.

\begin{theorem}\label{thm:vr-l-planar}
Let $l$ be some fixed positive integer.
Then for any $n>0$ and any planar graph $G=(V,E)$ with $n$ vertices,
$\lvr(G)\leq \lceil 3(l+1)\log n\rceil$. Furthermore, computing such a coloring can be done in polynomial time.
\end{theorem}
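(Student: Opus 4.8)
The plan is to mimic the proof of \theoremref{thm:planar}, but replace the crude "peel off low-degree independent sets" decomposition with a recursive decomposition based on the path-separator theorem (\theoremref{paths-separator}), so that we obtain a laminar family of separators of logarithmic depth, and the key local property we need is that short paths in $G$ are "captured" by these separators. Concretely, I would build a separator tree as follows: given a connected planar $G$, fix a BFS spanning tree $T$ rooted at some $r$; \theoremref{paths-separator} gives three root paths $P_T(u),P_T(v),P_T(w)$ whose union $U$ is a $1/2$-separator. Recurse on each connected component of $G\setminus U$ (with its own BFS tree), stopping at components of bounded size. Since the sizes halve at each level, the recursion has depth at most $\log n$, and the separators used across all of one root-to-leaf branch of the recursion partition $V$ into the vertex sets $U_1,U_2,\dots,U_t$ with $t\le\log n$ (each $U_i$ being a union of three root paths in the $i$-th recursive BFS tree), plus we may fold the final bounded-size components in as well.

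The next step is to color. I would assign to each level $i$ of the recursion its own palette disjoint from and larger than those of levels $>i$ (note: I want later-separated vertices to have the \emph{higher} colors, since in vertex ranking the separator should be "on top" — so actually level-$1$ separators, which cut the whole graph, get the largest colors). Here is the crucial point: within each $U_i$, the vertices that lie on a single one of the three root paths form a shortest path (or BFS-tree path) of the relevant subgraph, and a BFS-tree path, colored by \emph{distance modulo $(l+1)$}, has the property that any two vertices at graph-distance $\le l$ along that path — indeed any two vertices of $U_i$ on that path within the same component that could be endpoints of a length-$\le l$ path avoiding higher levels — receive distinct colors. Using $3(l+1)$ colors per level (three paths, $l+1$ residues each) and $t\le\log n$ levels gives the claimed bound $\lceil 3(l+1)\log n\rceil$. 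To verify it is an $l$-vr-coloring: take a simple path $\pi$ of length $\le l$ between two vertices $x,z$ with $c(x)=c(z)$. Equal colors force $x,z$ into the same $U_i$, in fact the same root path at level $i$, in fact (by disjointness of the palettes of components — use a fresh palette shift per component, or observe components are vertex-disjoint and paths between them must pass through a higher separator) the same recursive component at level $i$. If $\pi$ ever visited a strictly higher level, we are done; otherwise $\pi$ stays within that component's subgraph minus its level-$i$ separator — but $x$ and $z$ are \emph{on} that separator path, and the distance between them along $\pi$ is $\le l$, so the distances-mod-$(l+1)$ colors must differ, contradiction. (One must check that two points on a shortest/BFS path within distance $\le l$ in the ambient graph are also within distance $\le l$ along the path; this holds because the path is a shortest path between its endpoints in the subgraph where it was extracted, hence isometric enough for the residues to separate them — this is exactly the technical lemma to nail down.)

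The main obstacle is this last technical point: a root path $P_T(u)$ in a BFS tree is a shortest path from $r$ to $u$, but two intermediate vertices $a,b$ on it need not be at distance $d_T(a,b)=|{\rm level}(a)-{\rm level}(b)|$ in $G$ — only $d_G(a,b)\le d_T(a,b)$. If $\pi$ is a short path from $a$ to $b$ that stays inside the current component but does not touch the level-$i$ separator except at its endpoints, then its length is $\ge d_G(a,b)$, and I need $d_G(a,b)\equiv {\rm level}(a)-{\rm level}(b)\pmod{l+1}$ to be forced, which requires $d_G(a,b)=d_T(a,b)$, i.e. the tree path is actually a shortest path in $G$ between $a$ and $b$. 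For a BFS tree this is true: the $r$-$b$ BFS path passes through $a$ and has length ${\rm level}(b)$, and ${\rm level}(b)-{\rm level}(a)=d_T(a,b)$; since any path from $a$ to $b$ of length $<d_T(a,b)$ could be spliced with the $r$-$a$ BFS path to beat ${\rm level}(b)$, we get $d_G(a,b)=d_T(a,b)$. So the fix is simply to insist on BFS spanning trees at every level of the recursion, and to color each path by level modulo $(l+1)$; then the colors of $a$ and $b$ differ whenever $0<d_G(a,b)\le l$. I would also spend a sentence confirming the polynomial running time: planar separators, BFS trees, and the coloring are all polynomial, and the recursion tree has polynomial size.
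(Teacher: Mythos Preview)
Your proposal is correct and essentially identical to the paper's proof: both use Thorup's three-path separator on a BFS spanning tree, recurse to depth $\log n$, give the top-level separator the highest $3(l+1)$ colors (one block of $l+1$ residues per path, by BFS-level modulo $l+1$), and verify validity via exactly the BFS shortest-path argument you spelled out. The paper phrases the recursion as an induction on $n$ rather than as an explicit separator tree, but the structure, the color budget, the case analysis, and the running-time remark all match.
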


\begin{proof}
The proof is by induction on $n$.
Fix an arbitrary vertex $r \in V$. Let $T$ be a Breadth-First-Search (BFS) tree of $G$ rooted at $r$.\footnote{A BFS tree with root $r$ is the tree created by taking the shortest paths from every node $u$ to $r$ (assuming a consistent choice between paths of equal length).}
By \theoremref{paths-separator} there exist three vertices $u_0,u_1,u_2\in V$ such that the set $S=P_T(u_0) \cup P_T(u_1) \cup P_T(u_2)$ is a $1/2$-separator of $G$.  Every connected component $Q$ in the graph $G\setminus S$
has at most $n/2$ vertices, so by the induction hypothesis $\lvr(Q)\leq\lceil 3(l+1)\log(n/2)\rceil=\lceil 3(l+1)\log n\rceil-3(l+1)$. Let $d=\lceil 3(l+1)\log n\rceil-3(l+1)$, and for each such connected component $Q$, use the colors $[d]$ to color it inductively (note that we use the same color set for different components). It remains to show that we can complete such a coloring to a legal $l$-vertex ranking of $G$ using additional $3(l+1)$ colors. For $j\in\{0,1,2\}$ and a vertex $x\in P_T(u_j)$ with $d_G(x,r)=m$, use color number
\[
d+1  + m~(\textrm{mod }l+1)+(l+1)j~.
\]
This implies that vertices on $P_T(u_0)$ will have the colors $d+1,\dots,d+l+1$, vertices on $P_T(u_1)$ will have the colors $d+l+2,\dots,d+2l+2$, and vertices on $P_T(u_2)$ will have the colors $d+2l+3,\dots,d+3l+3$.(If a vertex belong to more than one of $P_T(u_j)$, it can use any one of the colors given to it).

To see that this is indeed an $l$-vertex ranking of $G$, let $s, t \in V$ be two vertices with the same color, and let $P$ be a path between $s$ and $t$ of length at most $l$. Notice that either both $s,t$ are in $V\setminus S$ or both vertices are in $S$ (for otherwise, they cannot get the same color). If the former case is true, then there are two subcases: Either $P$ is entirely contained in $V \setminus S$, thus it lies inside a single connected component of $G\setminus S$, and by the induction hypothesis $P$ contains a vertex with color greater than the color of $s$ and $t$. The other subcase is that there exist some vertex $x \in P \cap S$, but then the color of $x$ is strictly greater than $d$, which is the bound on the colors of $s$ and $t$.
If, on the other hand, the latter case holds and $s,t \in S$, then it must be the case that both $s$ and $t$ belong to the same path $P_T(u_j)$ (otherwise they can not have the same color). By a property of a BFS tree, the path between $s$ and $t$ in $T$ is also the shortest path between $s$ and $t$ in $G$. Yet $s$ and $t$ have the same color, thus by construction the path between them in $T$ is of length at least $l+1$, which is a contradiction.

To see that such a coloring can be obtained in polynomial time, we rely on the fact that finding a BFS tree can be done in linear time. Additionally, by \cite{T04}, finding the three paths originating from the root can be done in polynomial time. Finally, the recursion occurs at most $\log n$ times, since in each step the number of vertices in each connected component is at most half that of the graph in the previous step.
\end{proof}

\section{Minor-Excluded Graph Families}

In this section we extend the results of \sectionref{sec:planar} to families of graphs excluding some fixed minor.

\subsection{The us-number of Graphs Excluding a Fixed Minor}

As our bounds depend only on the cardinality of the minor being excluded, it suffices to consider the family ${\cal M}_r$, consisting of all graphs that exclude $K_r$ as a minor. We begin with a bound on the us-number of such graphs, which generalizes \theoremref{thm:planar}.\footnote{Recall that planar graphs are in particular $K_5$-free.}

\begin{theorem} \label{mf-upper}
For any graph $G \in {\cal M}_r$ with $n$ vertices,
\[
\us(G)=O((r\sqrt{\log r})^3\cdot\log n)~.
\]
\end{theorem}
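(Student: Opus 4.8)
The plan is to mimic the proof of \theoremref{thm:planar}, replacing the two places where planarity was used (the edge bound $|E|\le 3n-6$ and the four-color theorem, which gave us $\Omega(n)$ independent vertices of bounded degree, and Lemma~\ref{bipartite}, which colored a bipartite planar graph) by the corresponding statements for $K_r$-minor-free graphs. First I would recall the standard fact (Kostochka/Thomason) that every $G\in{\cal M}_r$ has average degree $O(r\sqrt{\log r})$, and is therefore $O(r\sqrt{\log r})$-degenerate and $\chi(G)=O(r\sqrt{\log r})$-colorable. As in the planar case, let $\gamma=c\cdot r\sqrt{\log r}$ be such a degeneracy/chromatic bound; then the set $V'$ of vertices of degree less than $2\gamma$ has $|V'|\ge n/2$ by \propref{ez-lemma}, and since $G[V']\in{\cal M}_r$ is $\chi$-colorable with $O(r\sqrt{\log r})$ colors, it contains an independent set $V''$ of size $\Omega(n/(r\sqrt{\log r}))$. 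Set $V_1=V''$ and recurse on $G[V\setminus V_1]$; this produces $t=O(r\sqrt{\log r}\cdot\log n)$ independent sets $V_1,\dots,V_t$ with the property that every vertex of $V_i$ has at most $b=O(r\sqrt{\log r})$ neighbors in $\bigcup_{j>i}V_j$.

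The second ingredient is a minor-free analogue of Lemma~\ref{bipartite}: given a bipartite graph $G=(A\cup B,E)\in{\cal M}_r$ in which every vertex of $A$ has degree at most $b$, we need to color $B$ with $O((r\sqrt{\log r})^2 \cdot b)$ colors so that vertices of $B$ sharing an $A$-neighbor get distinct colors. The natural target graph is $H$ on vertex set $B$ where we put a clique on $N(x)\cap B$ for each $x\in A$ — a proper coloring of $H$ is exactly what we want, so it suffices to bound $\chi(H)$. Because each $x\in A$ contributes a clique of size $\le b$, and such a clique is a union of $b-1$ stars, $H$ is the union of $b-1$ graphs $H_1,\dots,H_{b-1}$, where $H_i$ is obtained from $G$ by contracting, for each $x\in A$, one $A$-edge of the star $T_i^x$ (so that a "center" vertex of $B$ inherits the edges to the "leaves"); contracting edges of a $K_r$-minor-free graph keeps it $K_r$-minor-free, hence each $H_i\in{\cal M}_r$ has average degree $O(r\sqrt{\log r})$. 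A union of $b-1$ graphs each of average degree $\le D$ has average degree $\le (b-1)D$, and this holds hereditarily, so $H$ is $O(b\cdot r\sqrt{\log r})$-degenerate and hence $O(b\cdot r\sqrt{\log r})$-colorable.

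Now I would combine the pieces exactly as in the proof of \theoremref{thm:planar}. Give each $V_i$ its own palette of $O(b\cdot r\sqrt{\log r})=O((r\sqrt{\log r})^2)$ colors, disjoint across $i$, ordered so that the palette of $V_i$ is below that of $V_j$ whenever $i<j$; within $V_i$, color using the minor-free bipartite lemma applied to $A=\bigcup_{j<i}V_j$, $B=V_i$ (note $b=O(r\sqrt{\log r})$ here). The total number of colors is $t\cdot O((r\sqrt{\log r})^2)=O(r\sqrt{\log r}\cdot\log n)\cdot O((r\sqrt{\log r})^2)=O((r\sqrt{\log r})^3\cdot\log n)$. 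The coloring is proper since each $V_i$ is independent and palettes are disjoint; and for any length-two simple path $(x,y,z)$ with $c(x)=c(z)$ we have $x,z\in V_i$ for a common $i$, and $y\in V_j$ with $j>i$ (if $j<i$ the bipartite lemma would be violated, and $j=i$ is impossible since $V_i$ is independent), so $c(y)>c(x)$.

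The step I expect to be the main obstacle is formulating the minor-free version of Lemma~\ref{bipartite} correctly — specifically verifying that the auxiliary graph $H$ (the "clique-per-$A$-vertex" graph on $B$) is a union of $b-1$ graphs each of which is a minor of $G$ and therefore still $K_r$-minor-free. In the planar case the authors used a concrete incidence/planar-duality lemma; here one must instead argue directly that contracting, for each $x\in A$, the appropriate edges of the star $T_i^x$ yields a $K_r$-minor-free graph containing $H_i$ as a subgraph, and that taking unions of $b-1$ such graphs multiplies the (hereditary) average degree by at most $b-1$. Everything else — the recursive extraction of independent sets of bounded forward-degree, and the final verification that the layered coloring is a us-coloring — is a routine transcription of the planar argument with $6$ replaced by $O(r\sqrt{\log r})$.
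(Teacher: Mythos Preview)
Your proposal is correct and matches the paper's proof essentially line for line: the paper states and proves exactly the minor-free analogue of Lemma~\ref{bipartite} that you describe (its Lemma~\ref{bipartite-minor}), by contracting for each $v\in A$ the edge to its $i$-th neighbor to realize each star-layer as a minor of $G$, and then carries out the same layered independent-set construction using Thomason's $O(r\sqrt{\log r})$ average-degree bound in place of planarity. The only cosmetic discrepancies are that the paper uses $b$ contracted graphs rather than $b-1$, and your sentence ``color $B$ with $O((r\sqrt{\log r})^2\cdot b)$ colors'' should read $O(r\sqrt{\log r}\cdot b)$, as you yourself conclude two sentences later.
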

The structure of the proof is the same as that of \theoremref{thm:planar} (though the constants will now depend on $r$). We will need to formulate and prove a lemma similar to \lemmaref{bipartite}.

\begin{lemma} \label{bipartite-minor}
Let $G=(A \cup B, E)$ be a bipartite graph in ${\cal M}_r$ with bipartition $A$ and $B$. Assume further that every vertex in $A$ has degree at most $b$ for some fixed integer $b$. Then there exists a coloring $c:B \rightarrow [Cb]$ of the vertices in $B$, for some $C=O(r\sqrt{\log r})$,
such that if $u$ and $v$ are two vertices in $B$ with a common neighbor (in $A$) then $c(u)\neq c(v)$.
\end{lemma}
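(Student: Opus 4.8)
The plan is to mimic the proof of \lemmaref{bipartite}, replacing the two places where planarity was used by the appropriate bounds for $K_r$-minor-free graphs. Recall that in \lemmaref{bipartite} we passed to an auxiliary plane graph $G'$ on vertex set $B$ whose faces of a distinguished subset $F'$ correspond (via a bijection $g:A\to F'$) to the neighborhoods of the vertices of $A$; then we formed $G''$ by adding a clique on the boundary of each $f\in F'$, and argued $G''$ was $(6b-1)$-degenerate because it was a union of $b-1$ planar graphs. For the minor-excluded case the structural tool I would invoke is the analogue for $K_r$-minor-free graphs of the ``planarization of a bipartite graph with one bounded side''. Concretely: since $G$ is $K_r$-minor-free and bipartite with every vertex of $A$ of degree at most $b$, there is a $K_r$-minor-free graph $G'$ on vertex set $B$ together with a family of vertex subsets $\{S_x : x\in A\}$, where $S_x = N_G(x)\subseteq B$, $|S_x|\le b$, and $G'[S_x]$ is connected for each $x$ (this is what the embedding lemma cited from \cite{D02} gives in the planar case via faces; in general one can obtain it from the fact that contracting each $x\in A$ into one of its neighbors keeps the graph $K_r$-minor-free, or directly from a surface/structural embedding argument). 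I would state this as the ``crucial lemma'' at the start.

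Given that, form $G''$ on vertex set $B$ by adding, for each $x\in A$, a clique on $S_x$. The goal is exactly that $\chi(G'')\le Cb$, since a proper coloring of $G''$ assigns distinct colors to any $u,v\in B$ with a common neighbor in $A$. As before, a clique on $|S_x|\le b$ vertices decomposes into $b-1$ forests (the ``stars'' footnote), and since $G'[S_x]$ is connected we may in fact route each of these $b-1$ spanning-tree-like subgraphs \emph{within $G'$-reachable structure}, so that $G''$ is contained in a union of $b-1$ graphs each of which is $K_r$-minor-free. (More carefully: the union of $G'$ with one ``star'' per $x$ is still $K_r$-minor-free, or at worst $K_{O(r)}$-minor-free — I would check the exact constant blow-up here and absorb it into $C$.) Now I invoke the classical theorem of Kostochka and Thomason: a $K_r$-minor-free graph has average degree $O(r\sqrt{\log r})$, hence is $O(r\sqrt{\log r})$-degenerate. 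The same bound applies to every subgraph (minor-closed), so each of the $b-1$ pieces is $c_0 r\sqrt{\log r}$-degenerate for $c_0=O(1)$, and a union of $b-1$ such graphs is $((b-1)c_0 r\sqrt{\log r})$-degenerate, therefore $Cb$-colorable with $C=O(r\sqrt{\log r})$. This is the same degeneracy-of-a-union-of-sparse-graphs argument as in \lemmaref{bipartite}, just with $6$ replaced by the Kostochka--Thomason density bound.

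The main obstacle — and the step I would spend the most care on — is the structural ``crucial lemma'': in the planar case the existence of $G'$ with the face/bijection property is quoted cleanly from \cite{D02}, but for general $K_r$-minor-free graphs there is no literal notion of faces, so I need a substitute guaranteeing (i) a $K_{O(r)}$-minor-free graph on $B$, and (ii) each $N_G(x)$ induces a connected (or boundedly-many-components) subgraph there, so that the added clique can be ``charged'' to $O(b)$ minor-free layers rather than blowing up the minor size. The clean way to get this is to observe that the bipartite graph $G$ itself is already $K_r$-minor-free; contract each $x\in A$ onto an arbitrary neighbor $\phi(x)\in B$ to obtain a graph $G'$ on $B$ that is still $K_r$-minor-free, with the image of $N_G(x)$ inducing a connected subgraph containing $\phi(x)$. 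Then adding a clique on each $N_G(x)$ to $G'$ increases the Hadwiger number only by a controlled amount, and the degeneracy bound goes through. I would double-check that this contraction argument really does preserve $K_r$-minor-freeness and delivers connectivity of $N_G(x)$ in $G'$; if a direct contraction is too lossy, I would instead cite the known ``$K_r$-minor-free bipartite graphs with one side of bounded degree'' planarization-style lemma (an analogue of the result used from \cite{D02}), whichever gives the cleaner constant. Everything after that is the verbatim degeneracy-and-union computation from \lemmaref{bipartite}.
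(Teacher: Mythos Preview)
Your plan has the right ingredients --- contract the $A$-vertices into $B$, invoke the Kostochka--Thomason density bound, and use degeneracy-of-a-union --- but you have wrapped them in an unnecessary and partly unjustified layer.  There is no ``face-analogue'' structural lemma for general $K_r$-minor-free classes, and you do not need one.  The paper's proof bypasses all of the $G'$-plus-clique-decomposition machinery: fix an ordering of the (w.l.o.g.\ exactly) $b$ neighbors of each $a\in A$; for each $i\in[b]$ contract every $a$ into its $i$th neighbor, obtaining a graph $G_i$ on vertex set $B$.  Each $G_i$ is a minor of $G$, hence $K_r$-minor-free.  If $u,v\in B$ share a neighbor $a$ and $u$ is the $i$th neighbor of $a$, then $\{u,v\}\in E(G_i)$.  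Thus the auxiliary graph $G'=(B,\bigcup_i E(G_i))$ already contains every edge you need, and as a union of $b$ graphs each of average degree $O(r\sqrt{\log r})$ (on every subgraph) it is $O(br\sqrt{\log r})$-degenerate and hence $Cb$-colorable.  Your ``one star per $x$'' layers are exactly these $G_i$'s once you vary the contraction target --- but you only built a single $G'$ and then tried to bolt the other stars onto it.

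One concrete error: your claim that ``the union of $G'$ with one star per $x$ is still $K_r$-minor-free, or at worst $K_{O(r)}$-minor-free'' is false in general --- a union of two $K_r$-minor-free graphs on the same vertex set can contain arbitrarily large clique minors.  This is harmless for the final bound, since you only ever need degeneracy of the union and degeneracy \emph{does} add.  But once you realize that, the single $G'$ and the clique-into-trees routing are redundant: each star-layer is already itself a minor of $G$, and $b$ such layers suffice.
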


\begin{proof}
W.l.o.g assume that each vertex in $A$ has degree exactly $b$, and for each vertex $v \in A$, fix some ordering of its $b$ neighbors $\{u_1,\cdots,u_b\}$. For each $1\leq i \leq b$, let $E_i\subseteq E$ be the set of edges between each $v\in A$ and its $i$-th neighbor. Define a graph $G_i$, which is obtained from $G$ by contracting all edges in $E_i$. Observe that we contract exactly one edge for each $v\in A$, thus the vertex set of $G_i$ can be identified with $B$. Moreover, the graph $G_i$ excludes $K_r$ as a minor, since excluding minors is invariant under edge contraction.

Consider the graph $G'=(B,\cup_{i=1}^b E(G_i))$. We claim that a proper coloring $c$ of $G'$, will also be a coloring satisfying the assertion of the Lemma. Indeed, if two vertices $x,y\in B$ have a common neighbor $v \in A$, then if $x$ is the $i$-th neighbor of $v$, the edge $\{y,v\}$ is not contracted in $G_i$. Observe that the contraction of $\{v,x\}$ in $G_i$ will have $x,y$ as neighbors, so a proper coloring will provide them with different colors. What remains to be proven, then, is that $G'$ can be properly colored with $O(br\sqrt{\log n})$ colors. By a theorem of Thompson \cite{T01}, a $K_r$-minor-free graph has an average degree of $O(r\sqrt{\log r})$. Since $G'$ (and any subgraph of $G'$) is the union of $b$ such graphs, it has an average degree at most $O(br\sqrt{\log r})$, hence it is $O(br\sqrt{\log r})$-degenerate. As we mentioned earlier, this means that $G'$ can be properly colored with $O(br\sqrt{\log r})$ colors.
\end{proof}

\begin{proof}[Proof of Theorem \ref{mf-upper}]
We go along similar lines as in the case for us-coloring of planar graphs, create a partition of $V$ to independent sets $V_1\cup\cdots\cup V_t$, such that for every $1\le i\le t$, any $x\in V_i$ has at most $b=b(r)$ neighbors in $\cup_{j=i}^tV_j$.

By \cite{T01} the average degree of $G$ is less than $a\cdot r\sqrt{\log r}$ (where $a$ is a universal constant), so let $b=2a\cdot r\sqrt{\log r}$. Let $V'\subseteq V$ to be the set of vertices whose degree is at most $b$, and by \propref{ez-lemma} we get $|V'|\ge n/2$. As $G[V']$ is $(a\cdot r\sqrt{\log r}-1)$-degenerate, it can be properly colored using $a\cdot r\sqrt{\log r}$ colors, so there is a color class $V''$ of size at least $|V''|\ge |V'|/(a\cdot r\sqrt{\log r})\ge n/b$. Put $V_1=V''$ and continue recursively on the graph $G[V\setminus V_1]$. At each step we throw out at least $\frac{1}{b}$ fraction of the vertices, which means that $t=O(\log_{1/(1-1/b)} n)=O(b\log n)$.

Now, for each $1\le i\le t$, color $V_i$ with $O(b\cdot r\sqrt{\log r})$ colors, by using \lemmaref{bipartite-minor} on $A=\cup_{j=1}^{i-1}V_j$, $B=V_i$ and the parameter $b$. From the same considerations as in the proof of \theoremref{thm:planar}, we obtain a us-coloring of $G$ of size $O(tb\cdot r\sqrt{\log r})=O((r\sqrt{\log r})^3\cdot\log n)$.
\end{proof}

\subsection{The $l$-vr-number of Graphs Excluding a Fixed Minor}

In this section we extend all theorems in this section and \sectionref{sec:planar} (albeit, with worse constants), by proving a bound on the $l$-vr number of graphs excluding a fixed minor. We shall use {\em path separators} of such graphs due to \cite{AG06}, which extend the planar separators of \cite{LT79,T04}.

\begin{definition}\label{def:path-separator}
A graph $G=(V,E)$ on $n$ vertices is {\em $s$-path separable} if there exists an integer $t$ and a separator $S\subseteq V$ such that:
\begin{enumerate}
\item $S=P_0\cup P_1\cup\dots P_t$, where for each $0\le i\le t$, $P_i$ is a collection of shortest paths in the graph $G\setminus(\bigcup_{0\le j<i}P_j)$.

\item $\sum_{i=0}^t|P_i|\le s$, that is, the total lengths of paths in $S$ is at most $s$.

\item Each connected component of $G\setminus S$ is $s$-path separable and has at
most $n/2$ vertices;
\end{enumerate}
\end{definition}

\begin{theorem}[\cite{AG06}]\label{thm:path-separator}
Every $H$-minor-free graph is $s$-path separable, for $s = s(H)$, and
an $s$-path separator can be computed in polynomial time.
\end{theorem}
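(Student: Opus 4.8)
The plan is to follow the argument of Abraham and Gavoille~\cite{AG06} --- in the paper itself one simply cites Theorem~\ref{thm:path-separator}, but the underlying proof runs as follows, and it rests on the Robertson--Seymour graph minor structure theorem. First I would invoke that structure theorem: there is a constant $h=h(H)$ such that every $H$-minor-free graph $G$ can be written as a \emph{clique-sum}, over cliques of size at most $h$, of graphs each of which is \emph{$h$-almost-embeddable} --- that is, obtained from a graph embedded in a surface of Euler genus at most $h$ by deleting at most $h$ ``apex'' vertices and attaching at most $h$ ``vortices'' of pathwidth at most $h$ along faces of the embedding. The decomposition together with its clique-sum tree is computable in polynomial time by the algorithmic versions of the structure theorem, which simultaneously delivers the ``polynomial time'' clause.

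The core of the proof is the base case: an $h$-almost-embeddable graph $G^\star$ is $s_0$-path separable for some $s_0=s_0(h)$. I would peel the structure off in layers. (i) For a graph genuinely embedded in a surface of Euler genus $g\le h$, fix a BFS tree $T$ from an arbitrary root and cut the surface open along $O(g)$ suitably chosen root-to-vertex paths of $T$; this reduces the embedding to a planar one, and on the planar remainder one applies Thorup's three-shortest-paths separator, \theoremref{paths-separator}. The union of the $O(g)$ planarizing paths with the three Thorup paths is a bounded collection of shortest paths which is also a balanced separator. (ii) The $\le h$ apex vertices are simply added to the separator. (iii) Each vortex has pathwidth $\le h$, so a bounded number of bags of a path decomposition suffice to cut it, contributing $O(h)$ further short pieces. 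Summing up, $G^\star$ is $O(h)$-path separable, and every step is polynomial-time.

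Finally I would lift this to all of $G$ through the clique-sum decomposition tree: run a centroid / weight-balancing search on that tree to locate a piece $G_{i^\star}$ such that each side carries at most $n/2$ of the original vertices, and take the union of (a) a path separator of $G_{i^\star}$ obtained from the base case with (b) the constantly many separating cliques incident to $G_{i^\star}$, each of size $\le h$ and hence a bounded family of trivial shortest paths. The bookkeeping point that needs care is that a path which is shortest inside a piece need not remain shortest in $G$ --- but \defref{def:path-separator} only asks the $P_i$ to be shortest in $G\setminus(\bigcup_{0\le j<i}P_j)$, which is precisely the slack needed; with it, the total length stays $s=s(H)$, the components stay balanced, and applying the statement recursively to each component yields the ``each component is $s$-path separable'' clause. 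I expect the two genuine obstacles to be the vortex handling in step (iii) of the base case and the residual-graph shortest-path bookkeeping in the clique-sum recursion --- this is exactly where \cite{AG06} invests the real effort, whereas the surface-cutting and the tree-centroid arguments are standard.
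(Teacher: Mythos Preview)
The paper does not prove \theoremref{thm:path-separator}; it is stated with attribution to~\cite{AG06} and used as a black box in the proof of \theoremref{thm:vr-l-minor}. So there is no proof in the paper to compare your proposal against --- you already note this yourself.

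Your sketch is a faithful high-level outline of the Abraham--Gavoille argument: the Robertson--Seymour structure theorem reduces to $h$-almost-embeddable pieces glued along small clique-sums; the bounded-genus core is planarized along $O(g)$ fundamental-cycle paths of a BFS tree and then handled by Thorup's three-path separator (\theoremref{paths-separator}); the apices and vortex bags contribute boundedly many further pieces; and the clique-sum tree is navigated by a centroid/balancing argument. You are also right that the vortex step and the residual-graph shortest-path accounting (exactly the slack built into \defref{def:path-separator}) are where the real technical work in~\cite{AG06} lies. As a proof \emph{plan} this is sound; as a proof it is far from self-contained, but since the paper itself defers entirely to~\cite{AG06}, no more is required here.
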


Now we are ready to prove the following theorem. The proof is similar to the proof of \theoremref{thm:vr-l-planar}, and we give the full details for completeness.

\begin{theorem}\label{thm:vr-l-minor}
Let $G=(V,E)$ be a graph on $n$ vertices that excludes $H$ as a minor, then $\lvr(G)\le s (l+1)\log n$, where $s=s(H)$ is a constant that depends only on $H$. Moreover, such a coloring can be computed in polynomial time.
\end{theorem}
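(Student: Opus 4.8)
The plan is to mimic the proof of \theoremref{thm:vr-l-planar}, using the $s$-path separator of \theoremref{thm:path-separator} in place of the three-shortest-paths separator of Thorup. The induction is on $n$. Given $G$ excluding $H$ as a minor, apply \theoremref{thm:path-separator} to obtain an $s$-path separator $S=P_0\cup\dots\cup P_t$ with $\sum_i|P_i|\le s$, where each $P_i$ is a collection of shortest paths in $G_i:=G\setminus(\bigcup_{j<i}P_j)$, and every component of $G\setminus S$ has at most $n/2$ vertices and is again $H$-minor-free (a subgraph of $G$). Set $d=s(l+1)\log(n/2)=s(l+1)\log n - s(l+1)$; by induction each component $Q$ of $G\setminus S$ receives an $l$-vertex ranking with colors in $[d]$ (reusing the same palette across components), which is legal globally since no path of length $\le l$ between two same-colored vertices in distinct components can avoid $S$. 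It remains to color $S$ itself using the remaining $s(l+1)$ colors so that the whole coloring is a valid $l$-vertex ranking.

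For the coloring of $S$: the budget $s(l+1)$ splits into $s$ blocks of $l+1$ consecutive colors, one block reserved per ``unit of path length''. Concretely, for the $k$-th path $Q$ in the whole collection (enumerating all shortest paths appearing across $P_0,\dots,P_t$, so $k$ ranges over at most $s$ values since $\sum_i|P_i|\le s$), assign to a vertex $x\in Q$ the color $d + 1 + (\mathrm{dist}_Q(x,\cdot) \bmod (l+1)) + (l+1)(k-1)$, where the distance is measured along $Q$ from a fixed endpoint — equivalently, index vertices along the shortest path $0,1,2,\dots$ and take the index mod $l+1$. Every vertex in $S$ thus gets a color $>d$, so all of $S$ dominates all of $G\setminus S$; and two vertices in $S$ of the same color must lie on the same path $Q$ (different blocks are disjoint) and be at distance a positive multiple of $l+1$ along $Q$.

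**The key verification** is that this is a legal $l$-vertex ranking. Let $s,t$ be same-colored with a path $P$ of length $\le l$ between them. As in the planar case, either both lie in $G\setminus S$ or both in $S$. If both lie in $G\setminus S$: if $P\subseteq G\setminus S$ it lies in one component and induction gives a higher color on $P$; if $P$ meets $S$, any such vertex has color $>d\ge$ their color. If both lie in $S$, say on the same shortest path $Q$ which is a shortest path in $G_k = G\setminus(\bigcup_{j<i(Q)}P_j)$: here is the one subtlety beyond the planar argument — I must argue that no short path $P$ in $G$ between them avoids this domination. Since their colors agree, $\mathrm{dist}_Q(s,t)$ is a positive multiple of $l+1$, hence $\ge l+1$; because $Q$ is a shortest path in $G_k$, $\mathrm{dist}_{G_k}(s,t) = \mathrm{dist}_Q(s,t)\ge l+1$, so any $G$-path of length $\le l$ between $s$ and $t$ must leave $G_k$, i.e.\ must pass through some vertex of $\bigcup_{j<i(Q)}P_j\subseteq S$ — and that vertex lies on a shortest path enumerated with a \emph{smaller} (or equal) index, hence... wait, this requires care: a vertex on $P_j$ with $j<i(Q)$ could in principle get a color in a block below $Q$'s block.

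**The main obstacle**, then, is exactly this ordering issue: I must ensure that whenever a short path between two same-colored vertices of $S$ is forced through another separator-path vertex $x$, the color of $x$ exceeds their common color. The fix is to assign blocks in the \emph{reverse} order of the separator layers: give $P_0$'s paths the \emph{highest} block(s) of colors and $P_t$'s paths the lowest (but still all $>d$). Then if $s,t$ lie on a path in $P_i$ and the detour vertex $x$ lies on a path in $P_j$ with $j<i$, the vertex $x$ gets a strictly higher color, as needed; and if $j=i$ (same layer, different path within $P_i$ — but paths within one $P_i$ are vertex-disjoint collections of shortest paths in the same graph $G_i$, and the distance argument applies to $G_i$) or the detour stays within layer $\ge i$ but leaves $Q$, one still needs the within-$G_i$ shortest-path property, which holds. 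I would also double-check that assigning consecutive integers $d+1,\dots,d+s(l+1)$ across $s$ blocks of size $l+1$ in reversed layer order is consistent, and that the total count $d + s(l+1) = s(l+1)\log n$ matches the claim. Finally, polynomial-time computability follows from \theoremref{thm:path-separator} (the separator is computable in polynomial time), the recursion depth being $O(\log n)$, and each coloring step being a trivial linear-time pass along the paths.
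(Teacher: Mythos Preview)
Your proposal is correct and follows essentially the same approach as the paper: recursive use of the $s$-path separator of \theoremref{thm:path-separator}, cyclic $(l+1)$-coloring of each separator path with its own block of colors, and the reversed layer ordering so that paths in $P_j$ receive higher colors than paths in $P_i$ whenever $j<i$ (the paper states exactly this ordering from the outset). The paper's verification is framed slightly differently --- rather than case-splitting on whether the endpoints lie in $S$, it takes an arbitrary short path $P$, lets $i$ be the minimal index with $P\cap P_i\neq\emptyset$, and shows the maximum color on $P$ (which lies in a single path $Q\subseteq P_i$) is unique using that $P\subseteq G\setminus\bigcup_{j<i}P_j$ and $Q$ is shortest there --- but this is equivalent to the argument you sketched after applying your ordering fix; the paper also notes that a vertex lying on several separator paths keeps the highest color assigned to it.
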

\begin{proof}
Let $S$ be an $s$-path separator of $G$ for some $s=s(H)$, as guaranteed by \theoremref{thm:path-separator}. As each connected component of $G\setminus S$ has at most $n/2$ vertices (and of course still excludes $H$ as a minor), we assume inductively that there exists an $l$-bounded vertex ranking for it using $s(l+1)\log(n/2)=s(l+1)\log n-s(l+1)$ colors. Note that we use the same colors for different components. We will use additional $s(l+1)$ colors for the vertices of $S$, each of these new colors will be higher than each color used for $V\setminus S$. Each of the paths in $S$ will have its own $l+1$ colors, in such a way that paths in $P_j$ will have higher colors than paths in $P_i$ for all $0\le j<i\le t$. Note that the second property of \defref{def:path-separator} guarantees that we have enough colors. To color a path, simply apply the $l+1$ colors consecutively in a cyclic manner along the path as in \theoremref{thm:vr-l-planar}. If a certain vertex belongs to more than one path, it will keep the highest color assigned to it.

Next we prove the validity of the coloring. To this end, consider a path $P$ of length at most $l$. If it is the case that $P$ is contained in a connected component of $G\setminus S$, the induction hypothesis guarantees that the coloring is valid, so consider the case that $P\cap S\neq\emptyset$. Let $i$ is the minimal such that $P\cap P_i\neq \emptyset$. By the definition of the coloring, the highest color in $P$ will be one the colors of $P_i$. Let $Q$ be the path in $P_i$ such that the highest color in $P$ comes from a vertex of $Q$. The proof will be concluded once we show that any color in $P\cap Q$ is unique. Seeking contradiction, assume that there are two vertices $u,v\in P\cap Q$ that were assigned the same color. By the first property of \defref{def:path-separator}, $Q$ is a shortest path of $G'=G\setminus (\bigcup_{0\le j<i}P_j)$, so the distance between $u,v$ in $G'$ must be at least $l+1$. However, the minimality of $i$ suggests that $P$ is also contained in $G'$, and as $P$ is a path of length at most $l$, it cannot contain both $u,v$.
\end{proof}

\section{Degenerate Graphs}

In this section we focus on the \textit{us-number} of the family of $d$-degenerate graphs. This family is a generalization of ${\cal M}_r$, and it could seem plausible that the us-number will be at most polylogarithmic in the number of vertices for degenerate graphs as well. We show that no such bound exists, by showing that for any integer $n$ there exists a $2$-degenerate graph $G$ with $n$ vertices satisfying $\us(G)=\Omega(n^{1/3})$. We also prove an upper bound $\us(G)=O(d\sqrt{n})$ for any $G\in\mathbb{G}_d$ with $n$ vertices.
These bounds demonstrate the large difference between us-coloring and proper coloring. We also show a large gap between the us-number and vr-number, by proving that there exists a graph $G\in\mathbb{G}_3$ with $n$ vertices such that $\vr(G)=\Omega(n)$.

\subsection{Upper Bound on the us-number of Degenerate Graphs}

In order to bound the us-number we will need the following Lemma, that provides a bound on the (usual) chromatic number of the squared graph $G^2$, for degenerate bounded degree graphs.

\begin{lemma} \label{d-lemma}
Let $G \in \mathbb{G}_d$ be a graph with maximum degree $\Delta$. Then $\chi(G^2) \leq (2\Delta+1)d+1$.
\end{lemma}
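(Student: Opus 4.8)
We want to show that if $G \in \mathbb{G}_d$ has maximum degree $\Delta$, then $\chi(G^2) \le (2\Delta+1)d + 1$.

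The strategy: show that $G^2$ is $((2\Delta+1)d)$-degenerate, which by the standard fact immediately gives $\chi(G^2) \le (2\Delta+1)d + 1$. To show $G^2$ is $((2\Delta+1)d)$-degenerate, I should exhibit, for every nonempty subgraph of $G^2$, a vertex of small degree.

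Let me think about the right ordering. Let me think about the degeneracy ordering of $G$ itself. By Proposition~\ref{order}, since $G \in \mathbb{G}_d$, there is an ordering $(v_1, \dots, v_n)$ of $V(G)$ such that each $v_i$ has at most $d$ neighbors among $\{v_1, \dots, v_{i-1}\}$. I'll claim this same ordering witnesses the degeneracy of $G^2$.

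Fix $v_i$ and count its neighbors in $G^2$ that come *before* it in the ordering, i.e., those $v_j$ with $j < i$ and $d_G(v_i, v_j) \in \{1,2\}$. These split into two types: (a) $v_j$ is a $G$-neighbor of $v_i$ with $j < i$ — there are at most $d$ of these by the ordering property; (b) $v_j$ is at $G$-distance exactly $2$ from $v_i$, so there is a common neighbor $w$ of $v_i$ and $v_j$. For type (b), I want to bound the count. Each such $v_j$ is a $G$-neighbor of some $w \in N_G(v_i)$. Now $v_i$ has at most $\Delta$ neighbors total in $G$. For each neighbor $w$ of $v_i$, I count how many of its $G$-neighbors $v_j$ with $j < i$ can contribute — but actually I need to be a bit careful to get the stated constant. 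The intended bound is $(2\Delta+1)d$, so let me see: I should split neighbors $w$ of $v_i$ by whether $w$ comes before or after $v_i$. This is where I'd need to be careful. For each $w \in N_G(v_i)$ with $w = v_k$, the relevant $v_j$ (with $j<i$, $v_j \sim_G w$) number at most $d$ if $k > i$ (since then the $\le d$ back-neighbors of $w$... no wait, that's not quite it either). Let me just say: for a neighbor $w$ that comes after $v_i$ in the order, $w$ has $\le d$ neighbors before it, so $\le d$ choices of $v_j$; for a neighbor $w$ that comes before $v_i$, I bound its contribution by $\le d$ as well via a symmetric argument on $w$'s back-neighbors, plus possibly $w$ itself — hmm, the exact accounting giving $2\Delta d$ from type (b) plus $d$ from type (a) is the delicate step. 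Combining, $v_i$ has at most $d + 2\Delta d = (2\Delta+1)d$ back-neighbors in $G^2$, so $G^2$ is $((2\Delta+1)d)$-degenerate, and since every subgraph of $G^2$ inherits a valid such ordering (restrict the ordering), we conclude $\chi(G^2) \le (2\Delta+1)d+1$.

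The main obstacle: getting the counting of distance-$2$ back-neighbors to land exactly on $2\Delta d$ rather than something like $\Delta d$ or $\Delta^2$. The key realization is that one should sum over the *common neighbor* $w$ rather than over endpoints, and distinguish whether $w$ precedes or follows $v_i$: a following $w$ contributes via its own $\le d$ back-neighbors, while a preceding $w$ — being one of $v_i$'s $\le d$ back-neighbors itself — can be handled by a cruder bound ($\le \Delta$ neighbors each), and one checks the arithmetic works out to the claimed $(2\Delta+1)d+1$. I would double-check this accounting carefully against the constant in the statement before finalizing.
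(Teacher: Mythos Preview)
Your approach is correct and essentially identical to the paper's: both use the degeneracy ordering of $G$ from \propref{order}, count back-neighbors of $v_i$ in $G^2$ by splitting on whether the common neighbor $w$ precedes or follows $v_i$ (yielding $d\cdot\Delta$ and $\Delta\cdot d$ respectively, plus $d$ direct back-neighbors), and conclude via greedy coloring / degeneracy. Your hesitation about the ``symmetric argument'' for preceding $w$ is well-founded---that version fails---but your corrected accounting (at most $d$ preceding $w$'s, each with at most $\Delta$ neighbors) is exactly what the paper does.
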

\begin{proof}
Take an ordering of the vertices of $G$, $V= (v_1,\dots,v_n)$ as in \propref{order}. We color the vertices greedily in order. Assume by induction that we have a proper coloring of $G^2[\{v_1,\dots,v_{i-1}\}]$ using at most $(2\Delta+1)d+1$ colors. Assign to $v_i$ the lowest possible color $c_i$.
We claim that $c_i \le (2\Delta+1)d+1$. Indeed, since the maximum degree is $\Delta$ there can be at most $\Delta$ indices $h>i$ such that $v_h$ is a neighbor of $v_i$. By the property of \propref{order}, each $v_h$ can have at most $d$ neighbors $v_j$ with $j<h$, so there are at most $\Delta d$ paths of the form $(v_i,v_h,v_j)$ with $j<i<h$. On the other hand, $v_i$ itself has at most $d$ neighbors $v_j$ with $j<i$, each of these has at most $\Delta$ neighbors, so there are at most $\Delta d$ paths of the form $(v_i,v_j,v_g)$ with $j<i$, $g<i$. We conclude that there are at most $2\Delta d$ vertices among $v_1,\dots v_{i-1}$ that are of distance $2$ from $v_i$, and at most $d$ of distance $1$, therefor $v_i$ has an available color in $[(2\Delta+1)d+1]$ such that the coloring of $G^2[\{v_1,\dots,v_i\}]$ is proper.
\end{proof}

\begin{theorem}
 Let  $G \in \mathbb{G}_d$ be a graph with $n$ vertices. Then $\us(G)=O(d\sqrt{n})$.
 \end{theorem}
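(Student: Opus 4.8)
The plan is to split $V$ by degree. Fix the threshold $\Delta=\lceil\sqrt n\rceil$. A $d$-degenerate graph on $n$ vertices has at most $dn$ edges (take the ordering from \propref{order} and count back-neighbors), so by \propref{ez-lemma} the set $H$ of vertices of degree at least $\Delta$ satisfies $|H|\le 2dn/\Delta\le 2d\sqrt n$. Let $V'=V\setminus H$ be the remaining low-degree vertices, and note that $G[V']$ is still $d$-degenerate with maximum degree at most $\Delta-1<\sqrt n$.

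Next I would assemble the coloring. Give every vertex of $H$ its own private color; this uses at most $2d\sqrt n$ colors, which we declare to be larger than all colors used on $V'$. To color $V'$, recall that a proper coloring of $G[V']^2$ is automatically a us-coloring of $G[V']$, and that $G[V']$ meets the hypotheses of \lemmaref{d-lemma} with $d$ and $\Delta-1$, so $\chi\bigl(G[V']^2\bigr)\le(2(\Delta-1)+1)d+1=O(d\sqrt n)$; use such a coloring on $V'$ with the lower part of the palette. The total number of colors is $O(d\sqrt n)+O(d\sqrt n)=O(d\sqrt n)$.

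Finally I would check that the combined coloring $c$ is a us-coloring of $G$. Properness inside $V'$ follows from the proper coloring of $G[V']^2\supseteq G[V']$; any edge meeting $H$ is fine since the $H$-colors are pairwise distinct and disjoint from the $V'$-colors. Now take a length-two simple path $(x,y,z)$ with $c(x)=c(z)$. Since distinct vertices of $H$ get distinct colors, $x$ and $z$ cannot both lie in $H$, so $x,z\in V'$. If also $y\in V'$, then $x$ and $z$ are at distance two in $G[V']$ and equally colored, contradicting that $c$ is proper on $G[V']^2$. Hence $y\in H$, and then $c(y)$ exceeds every color appearing on $V'$, in particular $c(y)>c(x)$, as required.

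The only free parameter is the threshold $\Delta$: balancing $2dn/\Delta$ against $(2\Delta+1)d+1$ forces $\Delta\asymp\sqrt n$, which is exactly what produces the $O(d\sqrt n)$ bound. I do not expect a real obstacle here — all the weight is carried by \lemmaref{d-lemma} together with the degree-splitting, and the remaining verification is the short case analysis above; the one point to be careful about is that the low-degree part must be handled via $G[V']^2$ (not a weaker us-coloring), so that the middle vertex of an equal-endpoint length-two path is forced into $H$.
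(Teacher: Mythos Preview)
Your proof is correct and follows essentially the same approach as the paper: split the vertices at the degree threshold $\sqrt{n}$, apply \lemmaref{d-lemma} to properly color $G[V']^2$ on the low-degree part, and assign each high-degree vertex its own color above all the $V'$-colors. One minor remark: your closing caveat is slightly off --- a mere us-coloring of $G[V']$ would in fact also suffice (if $x,y,z\in V'$ with $c(x)=c(z)$, the us-property on $G[V']$ already gives $c(y)>c(x)$), so the use of $G[V']^2$ is convenient rather than essential; this does not affect the correctness of your argument.
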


\begin{proof}
Put $U=\{v \in V: d(v)< \sqrt{n}\}$, and let $G[U]$  be the subgraph of $G$ induced by $U$. Notice that $G[U] \in \mathbb{G}_d $   and $\Delta(G[U])< \sqrt{n}$, so by \lemmaref{d-lemma}, $\us(G[U])\le \chi(G[U]^2)\le (2\sqrt{n}+1)d$. Assume that we have such a coloring for $U$, and it remains to complete the coloring for the large degree vertices $V\setminus U$. For every vertex $v \in V \setminus U$ use a distinct color larger than  $(2\sqrt{n} +1)d $. By \lemmaref{ez-lemma},
$|V \setminus U| \leq \frac{2dn}{\sqrt{n}}=2d\sqrt{n}$, so in total we have used at most $d(4\sqrt{n} +1)$ colors.
\end{proof}

\subsection{Lower Bound on the us-number of Degenerate Graphs}
\begin{theorem}
  For  every $n$ there is a \textit{$2$-degenerate graph} $G=(V,E)$ with $n$ vertices, such that $\us(G)>n^{1/3}$.
\end{theorem}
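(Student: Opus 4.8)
The plan is to build, for every integer $p\ge 2$, an explicit $2$-degenerate graph $G_p$ with $\us(G_p)\ge p$ and with strictly fewer than $p^3$ vertices. Since $|V(G_p)|<p^3$ forces $|V(G_p)|^{1/3}<p\le\us(G_p)$, this settles the claim for $n=|V(G_p)|$; an arbitrary $n$ is obtained by taking the $G_p$ with $p$ as large as the vertex budget allows and padding with isolated vertices (and for the finitely many small $n$ that this leaves uncovered, a triangle padded with isolated vertices is $2$-degenerate and has us-number $3$, which exceeds $n^{1/3}$ there).

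The graph $G_p$ is a thickened subdivided clique. Its vertices are $p$ hubs $u_1,\dots,u_p$, which form an independent set, together with, for each pair $1\le i<j\le p$, a set of $p-1$ link vertices $w^{(1)}_{ij},\dots,w^{(p-1)}_{ij}$; the only edges are $\{w^{(\ell)}_{ij},u_i\}$ and $\{w^{(\ell)}_{ij},u_j\}$. Equivalently, $G_p$ is the graph obtained from the multigraph in which every two of the $p$ hubs are joined by $p-1$ parallel edges by subdividing each edge once. It has $p+\binom p2(p-1)$ vertices, which is less than $p^3$ for $p\ge 2$, and it is $2$-degenerate: delete all links first (each has degree $2$) and then the hubs (now isolated).

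The heart of the matter is the lower bound $\us(G_p)\ge p$, which I would prove by contradiction. Suppose $c$ is a us-coloring of $G_p$ with colors in $[k]$ and $k\le p-1$. I claim no two hubs share a color. Indeed, if $c(u_i)=c(u_j)=:v$ with $i<j$, then for each $\ell$ the simple path $(u_i,w^{(\ell)}_{ij},u_j)$ has equal-colored endpoints, so $c(w^{(\ell)}_{ij})>v$; and for $\ell\ne\ell'$, the vertex $u_i$ is a common neighbor of $w^{(\ell)}_{ij}$ and $w^{(\ell')}_{ij}$, so if $c(w^{(\ell)}_{ij})=c(w^{(\ell')}_{ij})$ the path $(w^{(\ell)}_{ij},u_i,w^{(\ell')}_{ij})$ would force $c(u_i)$ to be larger than that color — impossible, since $c(u_i)=v<c(w^{(\ell)}_{ij})$. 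Hence the $p-1$ links between $u_i$ and $u_j$ receive pairwise distinct colors, all of them in $\{v+1,\dots,k\}$, a set of size at most $k-1\le p-2$; this is absurd. So the $p$ hubs use $p$ distinct colors, whence $k\ge p$, a contradiction.

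What requires insight here is the design rather than the verification: one must pick the right number of parallel links. A single subdivision vertex per pair (an ordinary subdivided $K_p$) only forces about $\sqrt p$ colors, which yields merely an $n^{1/4}$ bound, while making the links so numerous that they are themselves expensive would push the vertex count past $p^3$. Taking exactly $p-1$ links per pair is the balance point that makes the hubs provably rainbow while keeping $|V(G_p)|<p^3$, which is exactly what produces the exponent $1/3$. The rest is routine bookkeeping: checking $p+\binom p2(p-1)<p^3$, verifying that the values of $n$ realized by $G_3,G_4,G_5,\dots$ (padded with isolated vertices) leave no gap, and treating the small values of $n$ by hand. One can also see, by coloring $u_i$ with $i$ and the links greedily, that $\us(G_p)=\Theta(p)$, so the construction is tight.
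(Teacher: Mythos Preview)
Your argument is essentially the paper's: both build a $2$-degenerate graph by subdividing a multigraph on $p$ (respectively $k$) hub vertices with $\Theta(p)$ parallel edges between every pair, then argue by pigeonhole that two hubs share a color and that the links between them must be rainbow. The only cosmetic difference is that the paper uses $k$ links per pair while you use $p-1$ (compensating with the observation that link colors must strictly exceed the hub color); your treatment of arbitrary $n$ via padding is in fact more careful than the paper's.
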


\begin{proof}
Fix some $k \in \mathbb{N}$, and let $G$ be the graph obtained by taking the complete graph $K_k$, replicating each edge $k$ times and then subdividing each edge.
Notice that $G$ is, indeed, a $2$-degenerate graph, since every vertex subdividing an edge has degree $2$, and once all these vertices are removed we are left  only with isolated vertices. We have  $|V|=k+  k {k\choose 2}<k^3$. We will prove that $\us(G) \geq k$. Seeking contradiction, assume that $c:V \to \{1,\dots,m\}$ is a unique-superior coloring of $G$, with $m<k$. Then, by the pigeon-hole principle, there must be two distinct vertices among the original $k$ vertices of $K_k$, $v_i$ and $v_j$, such that $c(v_i)=c(v_j)$. Hence, for every vertex $u$ subdividing an edge between $v_i$ and $v_j$, $c(u)>c(v_i)$. But notice that it must hold that $c(u) \neq c(u')$ for every two vertices $u$ and $u'$ subdividing two distinct edges between $v_i$ and $v_j$, as otherwise the path $(u,v_i,u')$ will violate the requirement of a legal us-coloring. Since there are $k$ vertices subdividing two distinct edges between $v_i$ and $v_j$, we must use at least $k$ distinct colors for them, which yields a contradiction. See \figureref{DED} for an illustration.
\end{proof}

\begin{figure} [H]  \label {DED}
    \centering
    \includegraphics[width=3in]{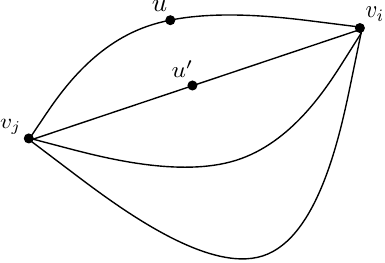}
    \caption{if $c(v_i)=c(v_j)$, then  $c(u) \neq c(u')$ for every two subdividing vertices $u$ and $u'$.  }
\end{figure}

\subsection{The vr-number of $3$-Regular Graphs}
Here we show that the vr-number of a bounded degree graph can be arbitrarily larger than its us-number. In particular,
we show that there exists a $3$-regular graph $G$ with $n$ vertices such that $\vr(G)=\Omega(n)$. Denote by $\pw(G)$ the {\em path-width} of $G$, by $\tw(G)$ the {\em tree-width} of $G$, and by $h(G)$ the \emph{vertex expansion} of $G$. See \cite{BGHK95,GM08} for the definitions of these notions. We need the following known facts:

\begin{equation}\label{eq:pw}
\pw(G)\ge\tw(G)~.
\end{equation}

\begin{lemma}[\cite{BGHK95}] \label{pw}

For every graph $G$:
\[
\vr(G)\ge\pw(G)~.
\]
\end{lemma}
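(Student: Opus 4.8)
The plan is to establish $\vr(G) \ge \pw(G)$ by extracting, from any vertex ranking of $G$ with $k$ colors, a path decomposition of $G$ of width at most $k-1$. The strategy exploits the recursive structure of vertex rankings: the vertices of the highest color $k$ form an independent set whose removal disconnects $G$ into pieces, each of which inherits a vertex ranking with fewer colors. I would formalize this via induction on $k = \vr(G)$.

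First I would set up the induction. The base case $k = 1$ means $G$ has no edges, so $\pw(G) = 0 = k - 1$ (a path decomposition consisting of singleton bags, or even one empty bag plus isolated-vertex bags, works). For the inductive step, let $c : V \to [k]$ be an optimal vertex ranking and let $W = c^{-1}(k)$ be the set of vertices receiving the top color. The key structural observation is that in each connected component $C$ of $G \setminus W$, the restriction $c|_C$ is a vertex ranking of $C$ using only colors from $[k-1]$ — indeed any simple path in $C$ is also a simple path in $G$, so the unique-superior-along-paths property is inherited — hence $\vr(C) \le k - 1$ and by the inductive hypothesis $\pw(C) \le k - 2$.

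Next I would glue the component decompositions together. List the components $C_1, \dots, C_q$ of $G \setminus W$ with path decompositions $\mathcal{P}_1, \dots, \mathcal{P}_q$ of width $\le k-2$ each, and form the concatenation $\mathcal{P}_1 \mathcal{P}_2 \cdots \mathcal{P}_q$ into a single path of bags. Then add every vertex of $W$ to every bag. This adds at most $|W|$ to each bag... which is too much in general, so here is where the argument needs care: the standard trick is that we do \emph{not} need all of $W$ in every bag simultaneously. Actually the clean statement to prove is the slightly stronger one that $G$ has an \emph{interval} / path decomposition in which one can afford to put $W$ only where needed — but since $W$ together with each single component forms a subgraph, and edges of $G$ go only within a component or between $W$ and a component or within $W$, the correct move is: handle $W$ by noting $|W|$ could be large. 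The right fix is to strengthen the induction to bound $\pw$ by $\vr - 1$ using that each color class, not just the top one, can be peeled; equivalently, prove by induction on $k$ that any graph with a vertex ranking using colors in $\{1,\dots,k\}$ has a path decomposition of width $\le k-1$ \emph{such that the vertices of the top color appear in a contiguous run of bags}. Then concatenating children and inserting $W$ into the single junction region between consecutive children keeps the width bounded by $(k-2) + 1 = k - 1$, and edges within $W$ and from $W$ to each $C_i$ are covered. This is the step I expect to be the main obstacle: controlling the width increase when reinserting the top color class, which is exactly why one wants the stronger inductive invariant about contiguity of the top color's bags.

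Finally, combining the inductive bound with inequality~\eqref{eq:pw}, namely $\pw(G) \ge \tw(G)$, is not needed for this lemma itself — the lemma only asserts $\vr(G) \ge \pw(G)$ — so I would simply conclude from the induction that $\pw(G) \le \vr(G) - 1 < \vr(G)$, which is the claim. (The inequality $\pw \ge \tw$ and the expansion parameter $h(G)$ are presumably invoked afterward to turn this into the $\Omega(n)$ lower bound for $3$-regular expanders, since good bounded-degree expanders have treewidth $\Omega(n)$.)
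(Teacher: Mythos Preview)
The paper does not prove this lemma at all; it is merely cited from \cite{BGHK95} and used as a black box. So there is no ``paper's own proof'' to compare against.

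That said, your proposed argument has a genuine gap at exactly the point you flag as ``the main obstacle,'' and your proposed fix does not close it. You correctly note that the top color class $W = c^{-1}(k)$ must be added back after gluing the component path decompositions, and you worry that $|W|$ may be large. Your attempted remedy --- a strengthened invariant about contiguity of the top-color bags --- is stated too vaguely to work, and in fact the idea of ``inserting $W$ into the single junction region between consecutive children'' still adds $|W|$ to that junction bag, not $1$.

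What you are missing is a one-line observation that dissolves the entire difficulty: in any connected graph, the top color of a vertex ranking is used \emph{exactly once}. Indeed, if two vertices $u,v$ both receive color $k$ and lie in the same component, any simple $u$--$v$ path would need to contain a vertex of color strictly greater than $k$, which is impossible. Hence you may assume without loss of generality that $G$ is connected (pathwidth is the maximum over components), so $W = \{w\}$ is a single vertex. Then the gluing is trivial: concatenate the path decompositions of the components of $G \setminus \{w\}$ and add $w$ to every bag; the width rises from at most $k-2$ to at most $k-1$, and all edges incident to $w$ are covered. With this observation your inductive scheme goes through cleanly, and no strengthened invariant is needed.
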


\begin{lemma}[\cite{GM08}] \label{tw}

For every graph $G$:
\[
\tw(G) \geq h(G)\cdot |V(G)|/4~.
\]
\end{lemma}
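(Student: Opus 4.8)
The final statement is \lemmaref{tw}: for every graph $G$, $\tw(G)\ge h(G)\cdot|V(G)|/4$. Here $h(G)$ is the vertex expansion, i.e.
\[
h(G)=\min_{\substack{U\subseteq V\\ 0<|U|\le |V|/2}}\frac{|N(U)\setminus U|}{|U|}~,
\]
so the plan is to show that every small separator in $G$ is large, and then recall that a tree decomposition of small width yields a small separator. First I would set $n=|V(G)|$ and $w=\tw(G)$, fix an optimal tree decomposition $(\mathcal{T},\{X_t\})$ of width $w$, and invoke the standard ``balanced separator'' consequence of bounded tree-width: there is a bag $X_t$ such that every connected component of $G\setminus X_t$ has at most $n/2$ vertices. (This is the classical centroid argument on the tree $\mathcal{T}$: orient each edge toward the heavier side and find a bag where no component dominates; it is a well-known fact associated with \cite{BGHK95} and may simply be cited.)

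Next I would use this separator to build a set $U$ witnessing the bound on $h(G)$. Let $S=X_t$, so $|S|\le w+1$, and let the vertex sets of the connected components of $G\setminus S$ be $C_1,\dots,C_p$, each with $|C_i|\le n/2$. Greedily group the $C_i$ into a block $U$ with $|U|\le n/2$ but $|U|$ as large as possible: since each piece has size at most $n/2$, a simple greedy/bin-packing argument gives a subfamily whose union $U$ satisfies, say, $|U|\ge (n-|S|)/3$ (more carefully, one can arrange $n/2\ge|U|\ge (n-|S|)/2$ when a single component already exceeds half of $n-|S|$, and otherwise $|U|\ge(n-|S|)/3$ suffices — the constant $3$ is where the final $/4$ slack comes from). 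Crucially, because $U$ is a union of whole components of $G\setminus S$, every edge leaving $U$ goes into $S$, so $N(U)\setminus U\subseteq S$ and hence $|N(U)\setminus U|\le|S|\le w+1$.

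Then I would simply combine the two bounds. By definition of $h(G)$, using this $U$ as a test set,
\[
h(G)\le\frac{|N(U)\setminus U|}{|U|}\le\frac{w+1}{|U|}\le\frac{w+1}{(n-|S|)/3}\le\frac{w+1}{(n-w-1)/3}~.
\]
For $w+1\le n/2$ (and if $w+1>n/2$ the inequality $\tw(G)\ge h(G)n/4$ holds trivially since $h(G)\le1$ always, as one can take $U$ to be a single vertex... more precisely $h(G)\le \Delta$ is not quite enough, but for $w\ge n/2$ we get $\tw(G)=w\ge n/2\ge h(G)\cdot n/4$ because $h(G)\le 2$ in general — actually $h(G)\le n/2$ trivially and one should just check the constant), this rearranges to $w\ge h(G)\cdot n/4$ after absorbing the $+1$'s and the factor $3$ into the constant $4$. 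I would write the constants out carefully here rather than hand-wave them.

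\textbf{Main obstacle.} The only real content is the centroid/balanced-separator fact for tree decompositions; if it may be cited from \cite{BGHK95} the rest is bookkeeping. The subtle point to get right is the chain of constants: the separator has size $\le w+1$ (not $w$), the greedy grouping of components loses a factor (getting $|U|$ comparable to $n$ rather than exactly $n/2$), and one must verify the stated constant $1/4$ actually survives — in particular handling the degenerate regime $w+1>n/2$ separately so the clean inequality $\tw(G)\ge h(G)\cdot|V(G)|/4$ holds with no exceptions. I expect this constant-chasing, not any conceptual difficulty, to be where care is needed.
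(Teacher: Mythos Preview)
The paper does not prove \lemmaref{tw} at all: it is stated as a known result with the citation \cite{GM08} and used as a black box, so there is no proof in the paper to compare your proposal against.

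That said, your proposed argument is the standard one and is essentially correct. Your three-way split to get $|U|\ge (n-|S|)/3$ with $|U|\le n/2$ does work (sort the components by size, let $j$ be the first index where the prefix union exceeds $n/2$, and observe that the prefix $U_{j-1}$, the single component $C_j$, and the suffix $U'$ each have size at most $n/2$ and together sum to $n-|S|$, so one of them has size at least $(n-|S|)/3$). Your observation that $h(G)\le 2$ always (take any $U$ of size $\lfloor n/2\rfloor$, whose exterior neighbourhood has size at most $\lceil n/2\rceil$) handles the degenerate regime $w+1\ge n/2$. The one place you should actually write things out rather than wave is the final rearrangement: from $h(G)\le 3(w+1)/(n-w-1)$ you get $h(G)\cdot n\le (w+1)(3+h(G))$, and you need to check that the exact constant $1/4$ survives without a stray additive $-1$; this may require either a slightly sharper balanced-separator statement or absorbing the $+1$ into the case split. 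None of this is conceptually hard, but it is exactly the ``constant-chasing'' you flagged, and it is the only place a grader could dock you.
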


\begin{mremark}\label{rem:ex}
There exists a constant $h>0$ and an infinite family ${\cal F}$ of $3$-regular graphs, such that for all $G\in {\cal F}$, $h(G)\ge h$ (see, e.g., \cite{HLW06}).
\end{mremark}

Combining \eqref{eq:pw}, \lemmaref{pw} and \lemmaref{tw}, we obtain that the family ${\cal F}$ of \remarkref{rem:ex} satisfies for all $G\in{\cal F}$:
\[
\vr(G) \geq pw(G) \geq tw(G) \geq h(G)\cdot |V(G)|/4=\Omega(|V(G)|)~.
\]

%

\subsubsection*{Acknowledgments.}
We wish to thank Panagiotis Cheilaris for many helpful discussions.


\bibliographystyle{alpha}
\bibliography{paper}
\end{document}